\newtheorem{theorem}{Theorem}[section]
\newtheorem{lemma}[theorem]{Lemma}
\newtheorem{proposition}[theorem]{Proposition}
\newtheorem{remark}[theorem]{Remark}
\newcommand{\beqa}{\begin{eqnarray*}}
\newcommand{\eeqa}{\end{eqnarray*}}
\newcommand{\field}[1]{\mathbb{#1}}
\newcommand{\bR}{\field{R}}        %  real numbers
\newcommand{\bC}{\field{C}}        %  complex numbers
\def\la{\lambda}
\def\<{\left<}
\def\>{\right>}
\def\mv1{M_v^1}
\def\mn{(m,n)}
\def\mn'{(m',n')}
\def\Ren{\mathbb{R}^d}
\def\Sn2{S_{2}(L^{2}(\Ren))}
\def\S1{S_{1}(L^{2}(\Ren))}
\def\sig00{\sigma_{0,0}}
\def\la{\langle}
\def\ra{\rangle}
\begin{document}

\begin{frontmatter}

\title{Beyond Single-Window Graph Fourier Analysis}

\author{Iulia Martina Bulai}
\address{Universit\`a di Torino, Dipartimento di Matematica, via Carlo Alberto 10, 10123 Torino, Italy, iuliamartina.bulai@unito.it}

\author{Elena Cordero}
\address{Universit\`a di Torino, Dipartimento di Matematica, via Carlo Alberto 10, 10123 Torino, Italy, elena.cordero@unito.it}
\author{Edoardo Pucci}
\address{Universit\`a di Torino, Dipartimento di Matematica, via Carlo Alberto 10, 10123 Torino, Italy, edoardo.pucci@unito.it }
\author{Sandra Saliani}
\address{Universit\`a di Napoli Parthenope, Dipartimento di Ingegneria, Centro Direzionale Isola C4, 80143 Napoli, Italy, sandra.saliani@uniparthenope.it}

%\thanks{}
%\subjclass{Primary 35S30; Secondary 47G30}

%\subjclass[2010]{35S30, 47G30,42A38}

\begin{abstract}
We introduce a multi-windowed graph Fourier transform (MWGFT) for the joint vertex-frequency analysis of signals defined on graphs. Building on generalized translation and modulation induced by the graph Laplacian, the proposed framework extends the windowed graph Fourier transform by allowing multiple analysis and synthesis windows. Exact reconstruction formulas are derived for complex-valued graph signals, together with sufficient and computable conditions guaranteeing stable invertibility. The associated families of windowed graph Fourier atoms are shown to form frames for the space
of graph signals. Numerical experiments on synthetic and real-world graphs confirm exact reconstruction up to machine precision and demonstrate improved stability and vertex–frequency localization compared to single-window constructions, particularly on irregular graph topologies.
\end{abstract}

%\date{\today}

\begin{keyword}
Graph Signal Processing; Vertex-frequency analysis;
 Generalized translation and modulation;
    Windowed Graph Fourier Transform
    \MSC[2020]{42A38,05C50}
\end{keyword}
%\keywords{}

\end{frontmatter}

\section{Introduction}
Graph signal processing (GSP) has emerged as a powerful framework for extending classical harmonic analysis to data supported on graphs and networks. By introducing graph-adapted analogues of fundamental tools such as the graph Fourier transform, \cite{9144464}, \cite{7937851}, \cite{osti_10092281}, \cite{ZHENG202135}, and its generalization \cite{ALIKASIFOGLU2025109944}, graph wavelets, \cite{HAMMOND2011129}, \cite{7355305}, graph wavelet packets, \cite{BULAI202318}, \cite{BREMER200695}, \cite{Dam2024}, \cite{song2026waveletpacketcontentpositiveoperators}, and other spectral filtering techniques, \cite{8347162}, \cite{10962535}, \cite{Saito2024}, \cite{8038007}, \cite{8907446}, GSP
provides representations that exploit the intrinsic geometry of the underlying
graph. These methods, which are based on the spectral properties of graph-related
operators such as the Laplacian, have found successful applications in denoising,
clustering, learning on networks, and the identification of structural patterns
in complex, high-dimensional data, For some examples see for instance \cite{10.1371/journal.pone.0261041}, \cite{10.3389/fnins.2019.00585}, \cite{10.1371/journal.pone.0128136}, \cite{8307490}, \cite{8309033}, \cite{Behjat2023VoxelWise}.

Related but complementary approaches in graph signal processing focus on
approximation and interpolation, including kernel-based and partition-of-unity
methods \cite{ERB2022368}, \cite{CAVORETTO2024128502}, \cite{Cavoretto20246685}, \cite{Cavoretto202225}.
While these techniques emphasize stable reconstruction and smoothing, they do
not provide an explicit vertex--frequency representation. Other distinct developments include cross-spectral analysis of graph signals
\cite{Kim2024CrossSpectralAO}, and
methods for dynamic graphs \cite{GENG2023101579}, with applications such as neuroimaging further highlighting the need for joint vertex--frequency analysis
\cite{10095285}.

Numerous contributions have addressed the construction of localized analysis tools on graphs. Among these, graph wavelet transforms represent one of
the most widely studied approaches. Wavelet-based methods typically rely on
spectral filtering of the graph Laplacian, where a single generating kernel is
dilated across scales to produce families of localized atoms. These
constructions are primarily designed to capture multiscale behavior and have
proven effective in applications involving hierarchical or scale-dependent
signal features; see \cite{BREMER200695}, \cite{BULAI202318}, \cite{COIFMAN200653}, \cite{Dam2024}, \cite{HAMMOND2011129}
and references therein. For recent overviews of spectral wavelet constructions
and related multiscale tools we refer the reader to \cite{Fonseca2024}.

In parallel, a different line of research focuses on vertex--frequency analysis,
aiming to generalize classical time--frequency methods to graph domains. A
fundamental contribution in this direction is the windowed graph Fourier
transform introduced by Shuman, Ricaud, and Vandergheynst in \cite{Shuman2016}.
Their framework establishes generalized notions of translation and modulation
on graphs, defined intrinsically through the graph Laplacian spectrum, and
leads to a graph Fourier transform analogue of the short-time Fourier transform
(STFT). This approach provides a joint vertex--frequency representation that
directly mirrors classical Gabor analysis and enables the study of localized
oscillatory phenomena on graphs. Related frame-based and Gabor-type
constructions have been explored as well; in particular, Ghandehari et al.
investigated Gabor-type frames for graph signal processing \cite{Ghandehari2021},
and more recent works consider vertex--frequency representations in generalized
spectral domains such as fractional graph Fourier transforms \cite{SZ2026}.

The present work builds directly on the vertex--frequency framework of
\cite{Shuman2016}, extending it in a direction inspired by multi-window Gabor
analysis in the Euclidean setting. In classical signal processing, the
limitations of single-window Gabor representations are well known and have
motivated the introduction of multi-window schemes, which combine several
windows with different localization properties. Such constructions were
originally rooted in Gabor’s theory of communication \cite{Gabor1946} and later
developed into a robust frame-theoretic setting with important contributions by
Zeevi, Zibulski, and Porat \cite{Zeevi1998}, \cite{ZibulskiZeevi1997}. These works show
that multi-window Gabor systems provide enhanced flexibility, improved numerical
stability, and better adaptability to heterogeneous signal structures.

Unlike graph wavelet approaches, which are inherently based on spectral scaling
(dilations) of a single kernel, the methodology developed in this paper is
fundamentally Gabor-based. The proposed multi-windowed graph Fourier transform
(MWGFT) relies on the independent use of generalized translation and modulation
operators, allowing windows to be shifted across vertices and modulated across
graph frequencies. As a
result, the MWGFT yields an explicit vertex–frequency representation, in which
spatial localization and frequency content are indexed separately and can be
controlled independently.

Signals supported on graphs naturally arise in a wide range of applications,
including sensor and communication networks, transportation and power grids,
social and biological networks, and data defined on irregular geometric
structures. In these settings, signals are indexed by the vertices of a graph
rather than by points on a regular domain, and their structure is intrinsically
influenced by the topology of the underlying network. This intrinsic
irregularity prevents the direct application of classical signal processing
tools and motivates the development of analysis methods that are adapted to
graph domains; classical references and overviews on networks and spectral graph
theory remain central \cite{Newman2010Networks}, \cite{chung1997spectral}.

Graph signal processing (GSP) provides a general framework for extending
harmonic analysis to signals defined on graphs. A central role in GSP is played
by the graph Laplacian, whose spectral decomposition yields a notion of
frequency that reflects the connectivity of the graph. Given a finite,
connected, undirected weighted graph $G=(V,E)$ with $|V|=N$, the (unnormalized)
graph Laplacian is defined by
\[
L = D - W,
\]
where $W$ is the weighted adjacency matrix and $D$ is the degree matrix (see
Section~\ref{sec:definitions} for notation). Since $L$ is real and symmetric,
it admits an orthonormal eigenbasis $\{\chi_\ell\}_{\ell=0}^{N-1}$ associated
with nonnegative eigenvalues $0=\lambda_0<\lambda_1\le\cdots\le\lambda_{N-1}$.
This spectral structure induces the graph Fourier transform (GFT) of a signal
$f\in\mathbb{C}^N$, defined by $\widehat f(\ell)=\langle f,\chi_\ell\rangle$,
with inversion $f(i)=\sum_{\ell}\widehat f(\ell)\chi_\ell(i)$.

While the GFT provides global spectral information, many real-world graph
signals exhibit strong spatial localization and nonstationary behavior.
Purely spectral representations are therefore often insufficient for practical
analysis, which motivates the construction of joint vertex--frequency
representations (cf.\ \cite{Shuman2016} and references therein).

In the WGFT framework, generalized translation and modulation are defined by
the Laplacian eigenstructure. For $k\in\{0,\dots,N-1\}$ and a window
$g\in\mathbb{C}^N$ one sets
\[
(M_k f)(i):=\sqrt{N}\,f(i)\chi_k(i),\qquad
(T_n f)(i):=\sqrt{N}(f*\delta_n)(i)=\sqrt{N}\sum_{\ell=0}^{N-1}\widehat f(\ell)\chi_\ell^*(n)\chi_\ell(i),
\]
and the windowed atoms are $g_{n,k}:=M_k T_n g$, with WGFT coefficients
$S_f(n,k)=\langle f,g_{n,k}\rangle$. This yields a joint vertex--frequency
representation closely analogous to the classical short-time Fourier transform.

Our first set of contributions revisits this WGFT construction and develops a
two-window reconstruction theory where analysis and synthesis windows may
differ. Concretely, given an analysis window $g$ and a synthesis window
$\gamma$, we derive an explicit reproducing formula and show that, under the
mild non-degeneracy condition $\langle T_n\gamma,T_n g\rangle\neq 0$ for all
vertices $n$, the reconstruction holds exactly (see Theorem~\ref{RecTeo1}).
Moreover, the families $\{g_{n,k}\}$ and $\{\gamma_{n,k}\}$ are shown to form
frames for $\mathbb{C}^N$ (Theorem~\ref{impl1}), yielding a frame-theoretic
interpretation of the WGFT even when analysis and synthesis windows differ.

Building on these two-window results, the main methodological advance of the
paper is the formalization and analysis of the multi-windowed graph Fourier
transform (MWGFT). We consider collections of analysis and synthesis windows
$g_1,\dots,g_J$ and $\gamma_1,\dots,\gamma_J$, form the corresponding
atoms $g^j_{n,k}=M_k T_n g_j$ and $\gamma^j_{n,k}=M_k T_n\gamma_j$, and prove
an exact multi-window reconstruction formula. Under the vertex-wise
non-degeneracy condition
\[
\sum_{j=1}^J \langle T_i\gamma^j,T_i g^j\rangle\neq 0,\qquad i=1,\dots,N,
\]
any $f\in\mathbb{C}^N$ can be recovered from its MWGFT coefficients via a
weighted superposition of the synthesis atoms (Theorem~\ref{Recteo2}). These
results are complemented by spectral-domain sufficient conditions that are
computable in practice and guarantee stable invertibility and numerically
well-conditioned reconstructions. The theory further shows that the multi-window
systems constitute frames for $\mathbb{C}^N$, thereby connecting the MWGFT to
classical multi-window Gabor frame theory \cite{Gabor1946}, \cite{Zeevi1998}, \cite{ZibulskiZeevi1997}
and to recent graph-based Gabor constructions \cite{Ghandehari2021}.

The practical advantages of the MWGFT are illustrated through numerical
experiments on both regular and irregular graphs. Using spectral-window design
based on shifted radial-basis kernels and exact Laplacian eigendecompositions,
we demonstrate exact reconstruction up to machine precision, verify the
non-degeneracy conditions numerically, and show improved vertex--frequency
localization when multiple windows are employed. The present implementation
relies on full eigendecomposition and is therefore suited to graphs of moderate
size; scalable variants based on approximate spectral filtering (e.g., Chebyshev
polynomials) are discussed as future work.

To the best of our knowledge, this paper is the first to provide a comprehensive analysis and exact reconstruction theory for WGFT-type vertex--frequency
representations that allows \emph{different} analysis and synthesis windows and
their rigorous combination in a multi-window setting. By bringing together
multi-window Gabor theory and graph Fourier analysis, we offer a flexible,
frame-theoretic approach for localized spectral analysis of graph signals.

The remainder of the paper is organized as follows. Section~\ref{sec:definitions} reviews basic
definitions and notation in graph signal processing. Section~\ref{sec:WFT_rec_formula} introduces the
two-window WGFT framework and derives reconstruction and frame results. The
multi-window extension is developed in Section~\ref{sec:MWGFT}. Numerical experiments
illustrating exact reconstruction, stability, and vertex--frequency localization
on both regular and irregular graphs are presented in Section~5. Finally,
Section~6 concludes the paper and outlines directions for future work.

\section{Definitions and notations}\label{sec:definitions}

We consider a weighted simple graph $G=(V,E)$ which is connected, undirected and finite, where $V=\{1,\dots,N\}$ is the set of vertices and $E$ the edges connecting the nodes.
  We denote by $W=(W_{ij})_{i,j=1,\dots,N}$  the real \textit{weighted adjacency matrix}, i.e., $W_{ij}\geq 0$ represents the weight of the edge from the vertex $i$ to the vertex $j$, $W_{ij}=0$ means that $i$ and $j$ are not connected by an edge. Since we deal with undirected simple graphs, $W$ is symmetric with all zeros on the diagonal.
 If $W_{ij}\ne 0$ then we say that $i$ and $j$ are adjacent and we write $i\sim j$.

A complex \textit{signal} on $G$ is a function $f:G\to \mathbb{C}$ or equivalently a vector \\ $f=[f(1),\dots,f(N)]^T\in\mathbb{C}^N$. \\The \textit{degree matrix} of $G$ is given by $D=\operatorname{diag}(d_1,\dots,d_{N})$ with $$d_i:=\displaystyle\sum_{j\sim i}W_{ij}$$ being the degree of the vertex $i$.
The real symmetric matrix $$\mathcal{L}:=D-W$$ is called the unnormalized \textit{Laplacian matrix} of $G$.
The action of $\mathcal{L}$ on a signal $f\in\mathbb{C}^N$ is given by:\begin{align*}
    (\mathcal{L}f)(i)&=(Df)(i)-(Wf)(i)
    =\ d_if(i)-\displaystyle\sum_{j=1}^{N} W_{ij}f(j)
    = \displaystyle\sum_{j\sim i}W_{ij}f(i)-\displaystyle\sum_{j\sim i} W_{ij}f(j)\\
    &=\displaystyle\sum_{j\sim i}W_{ij}[f(i)-f(j)],
\end{align*}
for every $i=1,\dots, N$. Since $\mathcal{L}$ is real and symmetric, there exists an orthonormal basis $(\chi_0,\dots,\chi_{N-1})$ of $\mathbb{C}^N$ made up of eigenvectors of $\mathcal{L}$ and its eigenvalues are real.

\begin{proposition}
    The eigenvalues of $\mathcal{L}$ are non-negative, in particular $\lambda=0$ is an eigenvalue and the corresponding eigenspace has dimension 1.
\end{proposition}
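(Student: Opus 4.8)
The plan is to reduce the entire statement to the positive-semidefinite quadratic form associated with $\mathcal{L}$. The excerpt already guarantees that $\mathcal{L}$ is real symmetric, hence self-adjoint with real eigenvalues and an orthonormal eigenbasis, so the whole proposition will follow from a single energy identity together with the connectivity hypothesis. Accordingly, the first and central step is to compute $\langle \mathcal{L}f, f\rangle$ for arbitrary $f\in\mathbb{C}^N$, where $\langle u,v\rangle=\sum_i u(i)\overline{v(i)}$.

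First I would derive the Dirichlet-energy identity. Starting from the pointwise action $(\mathcal{L}f)(i)=\sum_{j\sim i}W_{ij}[f(i)-f(j)]$ recorded above, I would form
\[
\langle \mathcal{L}f, f\rangle=\sum_{i=1}^N\sum_{j\sim i}W_{ij}\bigl(f(i)-f(j)\bigr)\overline{f(i)},
\]
and then symmetrize the double sum using $W_{ij}=W_{ji}$ (with the convention $W_{ij}=0$ when $i\not\sim j$). Swapping the roles of $i$ and $j$ and averaging the two expressions collapses the cross terms into squared differences, yielding
\[
\langle \mathcal{L}f, f\rangle=\tfrac12\sum_{i,j}W_{ij}\,\abs{f(i)-f(j)}^2.
\]
Since every $W_{ij}\ge 0$, the right-hand side is non-negative, so $\langle \mathcal{L}f,f\rangle\ge 0$ for all $f$. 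Applying this to any eigenvector $f\neq 0$ with eigenvalue $\lambda$ gives $\lambda\|f\|^2=\langle\mathcal{L}f,f\rangle\ge 0$, and hence $\lambda\ge 0$.

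Next I would exhibit $0$ in the spectrum. The constant signal $\mathbf{1}=[1,\dots,1]^T$ satisfies $(\mathcal{L}\mathbf{1})(i)=\sum_{j\sim i}W_{ij}[1-1]=0$, so $\mathcal{L}\mathbf{1}=0$ and $\lambda=0$ is indeed an eigenvalue. For the dimension claim I would characterize the kernel: if $\mathcal{L}f=0$, then $\langle\mathcal{L}f,f\rangle=0$, and the energy identity forces $W_{ij}\,\abs{f(i)-f(j)}^2=0$ for every pair, so $f(i)=f(j)$ whenever $i\sim j$.

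The only place a structural hypothesis is genuinely used — and the step I expect to be the crux rather than an obstacle — is invoking connectivity at the end: any two vertices are joined by a path of successively adjacent vertices, along which $f$ is constant, so $f$ is globally constant and lies in $\mathrm{span}\{\mathbf{1}\}$. This shows the eigenspace of $\lambda=0$ is exactly one-dimensional. The conceptual difficulty is thus concentrated entirely in recognizing that non-negativity, the presence of $0$, and the simplicity of $0$ all flow from the one energy identity combined with connectivity; once that identity is established, the remaining arguments are routine bookkeeping.
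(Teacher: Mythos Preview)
Your proposal is correct and follows essentially the same approach as the paper: both compute the quadratic form $\langle \mathcal{L}f,f\rangle$ as a weighted sum of squared differences to obtain non-negativity, and both invoke connectivity to show the kernel is spanned by the constant vector. Your version is in fact more carefully written (you include the symmetrization factor $\tfrac12$ and use $|f(i)-f(j)|^2$ rather than $[\phi(i)-\phi(j)]^2$, which is the right expression for complex signals), but the underlying argument is identical.
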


\begin{proof}
    If $\lambda$ is an eigenvalue of $\mathcal{L}$, then there exists a unit associated eigenvector $\phi$. Hence,
    \begin{equation*}
        \lambda=\la \mathcal{L}\phi,\phi\ra=\displaystyle\sum_{j\sim i}W_{ij}[\phi(i)-\phi(j)]^2\geq 0.
    \end{equation*}
    Since we assumed that $G$ is connected,  $f\in\mathbb{C}^N\setminus\{0\}$ solves $\mathcal{L}f=0$ if and only if $f(i)=f(j),$ for every  $i,j=1,\dots, N$. Thus, $\lambda=0$ is an eigenvalue and the associated eigenspace is given by $\{f=c(1,\dots,1)^T|\ c\in\mathbb{C}\}$.
\end{proof}

We fix an orthonormal eigenbasis of $\mathcal{L}:$ $$(\chi_0,\dots,\chi_{N-1}),$$
 adopting the convention that the related  eigenvalues are in ascending order:
 $$0=\lambda_0< \lambda_1\leq \lambda_2\leq\dots\leq\lambda_{N-1}.$$
  In this way, $\chi_0$ is the (only) eigenvector of the basis which is associated to the eigenvalue $\lambda_0=0$ and since $\|\chi_0\|_2=1$ we have that \begin{equation}
     |\chi_0(i)|=\frac{1}{\sqrt{N}},\quad \forall i=1,\dots,N.
 \end{equation}

The definitions and notations below are inferred from \cite{Shuman2016}.

    The graph Fourier transform (GFT) $\hat{f}$ of a signal $f \in \mathbb{C}^N$ is the expansion of $f$ in terms of the fixed eigenvectors of the graph Laplacian. It is defined by
\[
\hat{f}(\ell) := \langle f, \chi_\ell \rangle = \sum_{i=1}^N f(i) \chi_\ell^*(i),\quad \forall \ell=0,\dots,N-1,
\]
where we adopt the convention that the inner product is conjugate-linear in the second argument. We have the following expression for $|\hat f(0)|$:\begin{equation}\label{GFT0}
    |\hat f(0)|=\bigg|\displaystyle\sum_{i=1}^{N}f(i)\chi_0(i)\bigg|=|\chi_0(0)|\bigg|\displaystyle\sum_{i=1}^{N} f(i)\bigg|=\frac{1}{\sqrt{N}}\bigg|\displaystyle\sum_{i=1}^{N} f(i)\bigg|.
\end{equation}The inverse graph Fourier transform is given by
\[
f(n) = \sum_{\ell=0}^{N-1} \hat{f}(\ell) \chi_\ell(n).
\]

We denote the largest modulus of the elements of a given graph Laplacian eigenvector by
\[
\mu_\ell := \|\chi_\ell\|_\infty = \max_{i \in \{1, 2, \dots, N\}} |\chi_\ell(i)|,
\]
and the largest modulus of a given row of $\chi$ by
\begin{equation}\label{nui}
    \nu_n := \max_{\ell \in \{0, 1, \dots, N-1\}} |\chi_\ell(n)|.
\end{equation}
Note that
\begin{equation}\label{ele2}
    \mu = \max_{\ell \in \{0, 1, \dots, N-1\}} \{\mu_\ell\} = \max_{i \in \{1, 2, \dots, N\}} \{\nu_i\}.
\end{equation}

For any $k \in \{0, 1, \dots, N-1\}$, the \emph{generalized modulation} operator $M_k: \mathbb{C}^N \to \mathbb{C}^N$ is defined by
\begin{equation}\label{mod}
(M_k f)(i) := \sqrt{N} f(i) \chi_k(i),\quad i=1,\dots,N.
\end{equation}
The \emph{generalized convolution} of signals \( f, g \in \mathbb{C}^N \) on a graph is given by
\begin{equation}
    (f \ast g)(i) := \sum_{\ell=0}^{N-1} \hat{f}(\ell) \hat{g}(\ell) \chi_\ell(i).
    \label{eq:generalized_convolution}
\end{equation}

Equivalently, denoting by $\chi$ the matrix whose $\ell$-th column is $\chi_\ell$, we can write the generalized convolution as
\begin{equation}
    f \ast g = \chi
    \begin{bmatrix}
        \hat{g}(0) & 0 & \cdots & 0 \\
        0 & \hat{g}(1) & \cdots & 0 \\
        \vdots & \vdots & \ddots & \vdots \\
        0 & 0 & \cdots & \hat{g}({N-1})
    \end{bmatrix}
    \chi^* f.
    \label{eq:generalized_convolution_matrix}
\end{equation}

For any signal $f \in \mathbb{C}^N$ defined on the graph $G$ and any $n\in \{1, 2, \dots, N\}$, we  define a \emph{generalized translation} operator $T_n: \mathbb{C}^N \to \mathbb{C}^N$ via generalized convolution with a delta centered at vertex $n$:
\begin{equation}\label{Ti}
    (T_n f)(i) := \sqrt{N} (f \ast \delta_n)(i) = \sqrt{N} \sum_{\ell=0}^{N-1} \hat{f}(\ell) \chi_\ell^*(n) \chi_\ell(i).
\end{equation}
Note that $\hat{\delta}_n(\ell)= \chi_\ell^*(n)$, for $\ell=0,\dots{N-1}$.

\section{Windowed graph Fourier  transform, reconstruction formula and frame property}\label{sec:WFT_rec_formula}
In what follows we recall the definition of windowed graph Fourier transform (WGFT) and generalize the reconstruction formula in \cite{Shuman2016} to the case of two different windows, satisfying suitable assumptions.
Finally, we study the frame property of  the related WGFT atoms.

Namely, for a window $g \in \mathbb{C}^N$, we define the windowed graph Fourier atom related to $g$ by
\begin{equation}\label{e2}
g_{n,k}(i) := (M_k T_n g)(i) = N \chi_k(i) \sum_{\ell=0}^{N-1} \hat{g}(\ell) \chi_\ell^*(n) \chi_\ell(i),
\end{equation}
for $i,n\in\{1,\dots,N\}$, $k\in\{0,\dots,N-1\} $, and the windowed graph Fourier transform of a function $f \in \mathbb{C}^N$ associated to \eqref{e2} by
\[
S_f(n, k) := \langle f, g_{n,k} \rangle,\quad n\in \{1,\dots,N\},\,\,k\in\{0,\dots,N-1\}.
\]
The next result generalizes  %upper estimate of%
Lemma $1$ in \cite{Shuman2016}.
\begin{lemma}\label{upperest1}
    For any $g,\gamma \in \mathbb{C}^N$, $n=1,\dots,N$,
    \[
    |\la T_ng,T_n\gamma\ra|  \leq {N} \nu_n^2 \|g\|_2\|\gamma\|_2 \leq {N} \mu^2 \|g\|_2\|\gamma\|_2,
    \]
    where $\nu_n$ is defined in \eqref{nui}.
\end{lemma}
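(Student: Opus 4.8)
The plan is to compute $\langle T_n g, T_n\gamma\rangle$ explicitly via the spectral formula \eqref{Ti} and then bound the resulting sum using Cauchy--Schwarz together with Parseval's identity for the GFT. First I would write out both generalized translations in terms of the eigenbasis: by \eqref{Ti}, $(T_n g)(i)=\sqrt{N}\sum_{\ell}\hat g(\ell)\chi_\ell^*(n)\chi_\ell(i)$ and likewise for $\gamma$. Taking the inner product over $i$ and using the orthonormality of the eigenbasis $\{\chi_\ell\}$, the cross terms collapse and I expect to obtain
\[
\langle T_n g, T_n\gamma\rangle = N\sum_{\ell=0}^{N-1}\hat g(\ell)\,\overline{\hat\gamma(\ell)}\,|\chi_\ell(n)|^2.
\]
This is the key identity; all the work after it is estimation.

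Next I would take absolute values and pull out the factor $|\chi_\ell(n)|^2$. By the definition \eqref{nui}, $|\chi_\ell(n)|\le\nu_n$ for every $\ell$, so $|\chi_\ell(n)|^2\le\nu_n^2$, giving
\[
|\langle T_n g, T_n\gamma\rangle|\le N\,\nu_n^2\sum_{\ell=0}^{N-1}|\hat g(\ell)|\,|\hat\gamma(\ell)|.
\]
Then the Cauchy--Schwarz inequality applied to the sum over $\ell$ yields
\[
\sum_{\ell}|\hat g(\ell)|\,|\hat\gamma(\ell)|\le\Bigl(\sum_\ell|\hat g(\ell)|^2\Bigr)^{1/2}\Bigl(\sum_\ell|\hat\gamma(\ell)|^2\Bigr)^{1/2}=\|\hat g\|_2\,\|\hat\gamma\|_2.
\]
Since the GFT is a unitary change of basis (the $\{\chi_\ell\}$ form an orthonormal basis), Parseval gives $\|\hat g\|_2=\|g\|_2$ and $\|\hat\gamma\|_2=\|\gamma\|_2$, establishing the first inequality $|\langle T_n g, T_n\gamma\rangle|\le N\nu_n^2\|g\|_2\|\gamma\|_2$.

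Finally, the second inequality follows immediately from \eqref{ele2}, since $\nu_n\le\mu=\max_i\nu_i$ for every $n$, hence $\nu_n^2\le\mu^2$. I do not anticipate a genuine obstacle here: the statement is an elementary consequence of the spectral representation plus Cauchy--Schwarz and Parseval. The only point requiring minor care is the bookkeeping in the first step — correctly tracking the complex conjugates (the inner product is conjugate-linear in the second argument, per the paper's convention) and verifying that the double sum over $\ell,\ell'$ reduces to a single sum via $\langle\chi_\ell,\chi_{\ell'}\rangle=\delta_{\ell\ell'}$. I would note in passing that the estimate is slightly wasteful, since bounding $|\chi_\ell(n)|^2$ uniformly by $\nu_n^2$ and then applying Cauchy--Schwarz discards the coupling between the spectral weights and the windows; but this crude bound is exactly what is needed for the stated uniform estimate.
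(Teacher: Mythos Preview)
Your proposal is correct and follows essentially the same route as the paper: both compute the spectral identity $\langle T_n g,T_n\gamma\rangle=N\sum_\ell \hat g(\ell)\overline{\hat\gamma(\ell)}|\chi_\ell(n)|^2$ via orthonormality of the $\{\chi_\ell\}$, then combine the bound $|\chi_\ell(n)|\le\nu_n$ with Cauchy--Schwarz and Parseval. The only cosmetic difference is that the paper applies Cauchy--Schwarz first (with the weights $|\chi_\ell(n)|$ still inside) and then bounds them, whereas you bound the weights first and then apply Cauchy--Schwarz; either order gives the same estimate.
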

\begin{proof}
    By definition of translation in \eqref{Ti},
    \begin{align}
        \la T_ng,T_n\gamma\ra &= \sum_{i=1}^NT_ng(i)\overline{T_n\gamma(i)}\notag\\
        &    = \sum_{i=1}^N  {N} \sum_{\ell=0}^{N-1} \hat{g}(\ell) \chi_\ell^*(n) \chi_\ell(i)\overline{ \sum_{\ell'=0}^{N-1} \hat{\gamma}(\ell') \chi_{\ell'}^*(n) \chi_{\ell'}(i) }\notag\\
        &= N \sum_{\ell=0}^{N-1} \sum_{\ell'=0}^{N-1} \hat{g}(\ell) \overline{\hat{\gamma}(\ell')} \chi_\ell^*(n) \chi_{\ell'}(n) \sum_{i=1}^N \chi_\ell(i) \chi_{\ell'}^*(i)\notag\\
            &= N \sum_{\ell=0}^{N-1} \sum_{\ell'=0}^{N-1} \hat{g}(\ell) \overline{\hat{\gamma}(\ell')} \chi_\ell^*(n) \chi_{\ell'}(n) \delta_{\ell \ell'}\notag\\
        &= N \sum_{\ell=0}^{N-1} \hat{g}(\ell)\overline{\hat{\gamma}(\ell)}  |\chi_\ell^*(n)|^2. \label{29}
    \end{align}
    Hence, using Cauchy-Schwarz inequality and \eqref{nui} and \eqref{ele2},
    \begin{align*}
        |\la T_ng,T_n\gamma\ra|
        &\leq  N \left(\sum_{\ell=0}^{N-1} |\hat{g}(\ell)\chi_\ell(n)|^2\right)^{1/2}\left(\sum_{\ell=0}^{N-1} |\hat{\gamma}(\ell)\chi_\ell(n)|^2\right)^{1/2}\\
        &\leq N \nu_n^2 \|\hat{g}\|_2^2\|\hat{\gamma}\|_2^2   =N \nu_n^2 \|{g}\|_2^2\|{\gamma}\|_2^2,
    \end{align*}
    which yields the desired result.
\end{proof}
\begin{remark}
   Observe that, choosing the same window  $g=\gamma\in\mathbb{C}^N$, we obtain \begin{equation}\label{Uestimate}
        \|T_ng\|^2_2\leq N\nu_n^2\|g\|_2^2\leq N\mu^2\|g\|_2^2,\quad\forall n=1,\dots,N.
    \end{equation}
\end{remark}
  Another important inequality is the lower bound given in Lemma 1 of \cite{Shuman2016}, which is here generalized to the case of complex-valued signals.
    \begin{lemma}
    For a signal $f\in\mathbb{C}^N$ we have:\begin{equation}\label{Lestimate}
        |\hat f(0)|\leq \|T_nf\|_2,\quad\forall n=1,\dots,N.
    \end{equation}
    \end{lemma}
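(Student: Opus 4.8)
The plan is to obtain the bound by computing $\|T_nf\|_2^2$ explicitly and then isolating the contribution of the zero-frequency eigenvector. The computation in the proof of Lemma~\ref{upperest1} already provides everything needed: specializing the identity \eqref{29} to the case $g=\gamma=f$ yields
\[
\|T_nf\|_2^2 = \la T_nf,T_nf\ra = N\sum_{\ell=0}^{N-1}|\hat f(\ell)|^2\,|\chi_\ell(n)|^2,
\]
which is a sum of nonnegative terms.

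The key observation is that the $\ell=0$ summand can be evaluated exactly. Recall that $\chi_0$ is the normalized eigenvector associated with $\lambda_0=0$, so that $|\chi_0(n)|=1/\sqrt{N}$ for every vertex $n$. Hence the $\ell=0$ term equals
\[
N\,|\hat f(0)|^2\,|\chi_0(n)|^2 = N\,|\hat f(0)|^2\cdot\frac1N = |\hat f(0)|^2 .
\]
Since all remaining terms (for $\ell=1,\dots,N-1$) are nonnegative, discarding them can only decrease the sum, giving
\[
\|T_nf\|_2^2 = N\sum_{\ell=0}^{N-1}|\hat f(\ell)|^2\,|\chi_\ell(n)|^2 \;\geq\; N\,|\hat f(0)|^2\,|\chi_0(n)|^2 = |\hat f(0)|^2 .
\]
Taking square roots yields $|\hat f(0)|\leq\|T_nf\|_2$, uniformly in $n$, as claimed.

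There is essentially no technical obstacle here: the entire argument rests on the quadratic expansion already established in Lemma~\ref{upperest1} together with the normalization $|\chi_0(n)|=1/\sqrt{N}$. The only point requiring care is that the extension to complex-valued signals, which is the stated novelty relative to Lemma~1 of \cite{Shuman2016}, is automatic, since the expansion of $\|T_nf\|_2^2$ involves the moduli $|\hat f(\ell)|^2$ and $|\chi_\ell(n)|^2$ and therefore holds verbatim for $f\in\mathbb{C}^N$ without any reality assumption.
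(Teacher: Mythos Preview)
Your proof is correct, but it follows a different route from the paper. The paper first establishes that $T_n$ preserves the modulus of the mean, i.e.\ $\bigl|\sum_i T_nf(i)\bigr|=\bigl|\sum_i f(i)\bigr|$, then writes $|\hat f(0)|=\tfrac{1}{\sqrt N}\bigl|\sum_i T_nf(i)\bigr|$ and bounds this via the chain $\tfrac{1}{\sqrt N}\|T_nf\|_1\le\|T_nf\|_2$. Your argument bypasses both the mean-preservation step and the $\ell^1$--$\ell^2$ inequality by working directly with the spectral expansion \eqref{29}: since $\|T_nf\|_2^2=N\sum_\ell|\hat f(\ell)|^2|\chi_\ell(n)|^2$ is a sum of nonnegative terms and the $\ell=0$ term equals $|\hat f(0)|^2$ exactly, the bound follows by dropping the rest. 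This is shorter and reuses Lemma~\ref{upperest1} economically; the paper's approach, on the other hand, isolates the mean-preservation property of $T_n$ as an intermediate fact, which may be of independent interest even though it is not invoked elsewhere.
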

    \begin{proof}
         We first observe that $T_n$ preserves the modulus of the mean of $f$:
         \begin{align*}
        \bigg|\displaystyle\sum_{i=1}^{N} T_nf(i)\bigg|=&\  \sqrt{N}|\widehat{T_nf}(0)|
        =\  N|\widehat{(f\ast \delta_n)}(0)|\\
        =\  &N|\hat f(0)| \,|\hat{\delta}_n(1)|
        =\  N|\hat f(0)|\,|\chi_0(n)|\\
        =\ & N \frac{1}{\sqrt{N}}\bigg|\displaystyle\sum_{i=1}^{N} f(i)\bigg| \frac{1}{\sqrt{N}}
        =\  \bigg|\displaystyle\sum_{n=1}^{N} f(j)\bigg|.
    \end{align*}
    So\begin{align*}
        |\hat f(0)|=\frac{1}{\sqrt{N}}\bigg|\displaystyle\sum_{i=1}^{N} T_nf(i)\bigg|\leq \frac{1}{\sqrt{N}}\|T_nf\|_1\leq \|T_nf\|_2.
    \end{align*}
    In the last inequality we used  the equivalence of  norms on $\mathbb{C}^N$:\begin{equation*}
        \|x\|_2\leq \|x\|_1\leq \sqrt{N} \|x\|_2,\qquad \forall x\in\mathbb{C}^N.
    \end{equation*}
   This concludes the proof. \end{proof}

In what follows we generalize the reconstruction formula in Theorem $4$ of \cite{Shuman2016} to the case of two windows $g,\gamma\in  \mathbb{C}^N$, provided the  windows satisfy
\begin{equation}\label{e3}
    \la T_n\gamma, T_ng\ra\not=0,\quad n\in\{1,\,\dots,N\}.
\end{equation}

 A signal $f \in  \mathbb{C}^N$ can be recovered from its windowed graph Fourier transform coefficients, as shown below.
\begin{theorem}\label{RecTeo1}
Fix two windows function $g,\gamma\in\bC^N$ satisfying condition \eqref{e3}. Then for any $f \in \mathbb{C}^N$,
\begin{equation}\label{recformula1}
f(i) = \frac{1}{N    \la T_i\gamma, T_ig\ra} \sum_{n=1}^N \sum_{k=0}^{N-1} S_f(n, k) \gamma_{n,k}(i),\quad \forall i=1,\dots N.
\end{equation}
\end{theorem}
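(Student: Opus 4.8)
The plan is to substitute the definitions directly into the double sum $\sum_{n=1}^N\sum_{k=0}^{N-1}S_f(n,k)\gamma_{n,k}(i)$ and to collapse it using two orthonormality relations of the eigenbasis. Writing the atoms in factored form $g_{n,k}(j)=\sqrt{N}\,\chi_k(j)\,(T_ng)(j)$ and $\gamma_{n,k}(i)=\sqrt{N}\,\chi_k(i)\,(T_n\gamma)(i)$, and expanding $S_f(n,k)=\langle f,g_{n,k}\rangle=\sum_{j=1}^N f(j)\,\overline{g_{n,k}(j)}$, the modulation factor $\chi_k$ separates cleanly from the translation factors, which depend only on $j$ and $i$. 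After interchanging the finite sums, the double sum takes the form
\[
N\sum_{n=1}^N(T_n\gamma)(i)\sum_{j=1}^N f(j)\,\overline{(T_ng)(j)}\Big(\sum_{k=0}^{N-1}\chi_k(i)\,\overline{\chi_k(j)}\Big).
\]

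First I would deal with the inner sum over $k$, which is the conceptual heart of the argument. Since the matrix $\chi$ with columns $\chi_0,\dots,\chi_{N-1}$ is unitary, its rows are orthonormal as well, so $\sum_{k=0}^{N-1}\chi_k(i)\,\overline{\chi_k(j)}=\delta_{ij}$. This is a completeness (dual-orthonormality) statement, distinct from the column orthonormality $\langle\chi_\ell,\chi_{\ell'}\rangle=\delta_{\ell\ell'}$ used previously, and it is what makes the reconstruction exact: the Kronecker delta kills the sum over $j$, keeps only the term $j=i$, and extracts the value $f(i)$. The double sum thereby reduces to $N f(i)\sum_{n=1}^N(T_n\gamma)(i)\,\overline{(T_ng)(i)}$.

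It then remains to recognize the surviving $n$-sum as the inner product $\langle T_i\gamma,T_ig\rangle$. I would substitute $(T_n\gamma)(i)=\sqrt{N}\sum_{\ell}\hat\gamma(\ell)\chi_\ell^*(n)\chi_\ell(i)$ together with the conjugate of the analogous expression for $(T_ng)(i)$, and then sum over $n$ using the column orthonormality $\sum_{n=1}^N\chi_\ell^*(n)\chi_{\ell'}(n)=\delta_{\ell\ell'}$. This is precisely the computation already performed in \eqref{29}, with the windows $g,\gamma$ and the indices $n,i$ interchanged, and it gives $\sum_{n=1}^N(T_n\gamma)(i)\,\overline{(T_ng)(i)}=N\sum_{\ell}\hat\gamma(\ell)\,\overline{\hat g(\ell)}\,|\chi_\ell(i)|^2=\langle T_i\gamma,T_ig\rangle$. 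Dividing by $N\langle T_i\gamma,T_ig\rangle$, which is nonzero exactly by hypothesis \eqref{e3}, yields \eqref{recformula1}. I do not expect a genuine obstacle here: the proof is bookkeeping of two orthonormality relations, and the only points demanding care are invoking row-orthonormality (rather than the familiar column version) at the decisive step, and tracking the complex conjugations so that the $n$-sum is identified with $\langle T_i\gamma,T_ig\rangle$ and not its conjugate.
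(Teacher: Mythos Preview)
Your proposal is correct and follows essentially the same approach as the paper: both proofs expand the double sum and collapse it using the two orthonormality relations of the unitary matrix $\chi$ (rows and columns), then invoke \eqref{29} to identify the result with $N f(i)\langle T_i\gamma,T_ig\rangle$. The only cosmetic difference is the order in which the two orthonormalities are applied---you sum over $k$ first (row orthonormality, giving $\delta_{ij}$) and then over $n$, whereas the paper sums over $n$ first (column orthonormality, giving $\delta_{\ell\ell'}$) and then over $k$; your factored form $g_{n,k}(j)=\sqrt{N}\,\chi_k(j)(T_ng)(j)$ makes the bookkeeping a bit lighter, but the substance is identical.
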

\begin{proof}
    We have
\begin{align*}
\sum_{n=1}^N \sum_{k=0}^{N-1} & S_f(n, k) \gamma_{n,k}(i) \\
    &=N^2 \sum_{n=1}^N \sum_{k=0}^{N-1} \Bigg[ \sum_{m=1}^N f(m) \overline{\chi_k(m) \sum_{\ell=0}^{N-1} \hat{g}(\ell) \chi_\ell^\ast(n) \chi_\ell(m) }\Bigg]  \chi_k(i) \sum_{\ell'=0}^{N-1} \hat{\gamma}(\ell') \chi_{\ell'}^\ast (n)\chi_{\ell'}(i)  \\
    &=N^2  \sum_{m=1}^N f(m) \sum_{\ell,\ell'=0}^{N-1} \sum_{k=0}^{N-1} \overline{\hat{g}}(\ell) \hat{\gamma}(\ell') \chi_k^*(m)\chi_\ell^*(m) \chi_k(i)\chi_{\ell'}(i) \sum_{n=1}^N \chi_\ell(n) \chi_{\ell'}^*(n) \\
    &= N^2 \sum_{m=1}^N f(m) \sum_{\ell,\ell'=0}^{N-1} \sum_{k=0}^{N-1} \overline{\hat{g}}(\ell) \hat{\gamma}(\ell') \chi_k^*(m)\chi_\ell^*(m) \chi_k(i)\chi_{\ell'}(i)\delta_{\ell \ell'} \\
    &= N^2 \sum_{m=1}^N f(m) \sum_{\ell=0}^{N-1} \sum_{k=0}^{N-1} \overline{\hat{g}}(\ell) \hat{\gamma}(\ell) \chi_k^*(m)\chi_\ell^*(m) \chi_k(i)\chi_{\ell}(i)\\
        &= N^2  \sum_{m=1}^N f(m) \sum_{\ell=0}^{N-1}  \overline{\hat{g}}(\ell) \hat{\gamma}(\ell) \chi_\ell^*(m)\chi_{\ell}(i)\sum_{k=0}^{N-1}\chi_k^*(m) \chi_k(i)\\
        &=N^2  \sum_{m=1}^N f(m) \sum_{\ell=0}^{N-1}  \overline{\hat{g}}(\ell) \hat{\gamma}(\ell) \chi_\ell^*(m)\chi_{\ell}(i)\delta_{mi}\\
            &= N^2 f(i) \sum_{\ell=0}^{N-1}  \overline{\hat{g}}(\ell) \hat{\gamma}(\ell) \chi_\ell^*(i)\chi_{\ell}(i)\\
        &= N f(i)\la T_i\gamma, T_ig\ra,
\end{align*}
where the last equality follows from \eqref{29}.
\end{proof}
\begin{remark}
    The single window case $\gamma=g$ in $\bR^N$ recaptures the result in \cite[Theorem $4$]{Shuman2016}.
\end{remark}
\begin{remark}
  To simplify the computation of the reconstruction formula, one may impose the condition \begin{equation}\label{traslcost}
        |\la T_ng,T_n\gamma\ra|=1,\quad \forall n=1,\dots,N.
    \end{equation}
  Recall  the equality in \eqref{29}:\begin{equation*}
        \la T_ng,T_n\gamma\ra=N\displaystyle\sum_{\ell=0}^{N-1}\hat g(\ell)\overline{\hat \gamma(\ell)}|\chi_{\ell}^*(n)|^2,\quad \forall n=1,\dots N.
    \end{equation*}
    Thus, for example, one way to obtain condition \eqref{traslcost} is to choose $g=\gamma$ such that \begin{equation*}
        |\hat g(\ell)|=\frac{1}{\sqrt{N}},\quad \forall \ell=0,\dots N-1.
    \end{equation*}
\end{remark}
\subsection{Sufficient conditions for the reconstruction}

The reconstruction formula with two windows $g,\gamma\in\mathbb{C}^N$ requires the condition \eqref{e3}. In particular, if $g=\gamma$,  it reads as
    \begin{equation}\label{C2}
        \|T_ng\|_2\ne 0,\quad \forall n=1,\dots,N.
    \end{equation}
    In this case, the estimate $
        |\hat g(0)|\leq \|T_n g\|_2$
    provides the {sufficient} condition \begin{equation}\label{C3}\hat g(0)\ne 0,
    \end{equation}
which ensures \eqref{C2}.

Condition \eqref{C3} is stronger than \eqref{C2}, yet it is more readily \emph{computable} since
\begin{itemize}
\item[(i)] If we generate the window $g$ directly in the spectral domain, i.e., we generate $\hat g(\cdot)$, there is only  condition \eqref{C3} to check.
\item[(ii)] If we generate $g$ in the vertex domain, checking \eqref{C3} could be compared with checking if $\|T_ng\|_2$ is non-zero for only one $n$. So, if $N$ is large (as in most applications), the difference between \eqref{C2} and \eqref{C3} becomes more evident.
\end{itemize}
Also note that the Cauchy-Schwarz inequality
$$ |\la T_n\gamma, T_n g\ra|\leq \|T_n\gamma\|_2\;\|T_n g\|_2,\quad n\in\{1,\,\dots,N\},$$
implies that the following are equivalent:
\begin{itemize}
\item[a)] $\|T_ng\|_2\neq 0$, for all $n=1,\dots,N;$
\item[b)] There exists $\gamma$ such that $\la T_n\gamma, T_n g\ra\neq 0$ for all $n=1,\dots,N.$
\end{itemize}
Now we would like to generalize \eqref{C3} to the case of two windows and find a condition which implies \eqref{e3} and which is independent of the index  $n$.
\begin{lemma}\label{lemmasuff1}
    Let $g,\gamma\in\bC^N$, if\begin{equation}\label{csuff1}
        \mathfrak{Re}(\overline{\hat g(\ell)}\hat \gamma(\ell))\geq 0,\quad \forall \ell=1,\dots,N-1,\quad\mbox{and}\quad \mathfrak{Re}(\overline{\hat g(0)}\hat \gamma(0))>0,
    \end{equation}
    then condition \eqref{e3} is satisfied.
\end{lemma}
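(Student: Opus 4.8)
The plan is to reduce the non-vanishing of $\la T_n\gamma,T_ng\ra$ to a sign argument on its real part, exploiting the spectral identity already established in the proof of Lemma~\ref{upperest1}. Interchanging the roles of $g$ and $\gamma$ in formula \eqref{29}, I would first record the closed-form expression
\[
\la T_n\gamma, T_ng\ra = N\sum_{\ell=0}^{N-1}\overline{\hat g(\ell)}\,\hat\gamma(\ell)\,|\chi_\ell(n)|^2,\qquad n=1,\dots,N,
\]
which turns the abstract inner product into a weighted spectral sum whose coefficients are exactly the quantities $\overline{\hat g(\ell)}\hat\gamma(\ell)$ controlled by the hypothesis \eqref{csuff1}.

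Next I would pass to real parts. Since $|\chi_\ell(n)|^2\ge 0$ for every $\ell$ and $n$, taking $\mathfrak{Re}(\cdot)$ termwise gives
\[
\mathfrak{Re}\,\la T_n\gamma, T_ng\ra = N\sum_{\ell=0}^{N-1}\mathfrak{Re}\!\big(\overline{\hat g(\ell)}\hat\gamma(\ell)\big)\,|\chi_\ell(n)|^2.
\]
By \eqref{csuff1} each summand with $\ell\ge 1$ is non-negative, so the tail $\ell\ge 1$ contributes a non-negative amount and may be discarded from below, leaving $\mathfrak{Re}\,\la T_n\gamma,T_ng\ra \ge N\,\mathfrak{Re}\!\big(\overline{\hat g(0)}\hat\gamma(0)\big)\,|\chi_0(n)|^2$.

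Finally I would use the normalization $|\chi_0(n)|^2 = 1/N$, valid for every vertex $n$ (recorded in the excerpt just after fixing the eigenbasis), so that the surviving $\ell=0$ term equals $\mathfrak{Re}(\overline{\hat g(0)}\hat\gamma(0))$, which is strictly positive by the second part of \eqref{csuff1}. This yields $\mathfrak{Re}\,\la T_n\gamma,T_ng\ra\ge \mathfrak{Re}(\overline{\hat g(0)}\hat\gamma(0))>0$ uniformly in $n$, whence $\la T_n\gamma,T_ng\ra\ne 0$ for all $n=1,\dots,N$, which is precisely \eqref{e3}. There is no serious obstacle here: the only point requiring care is that the lower bound be \emph{independent} of $n$, and this is guaranteed by the fact that the constant (DC) eigenvector $\chi_0$ has modulus $1/\sqrt N$ at every vertex, so its contribution is identical for all $n$. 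The sign hypotheses on the remaining frequencies merely prevent destructive cancellation of this uniform positive contribution.
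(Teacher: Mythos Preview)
Your proof is correct and follows essentially the same approach as the paper: both invoke the spectral identity \eqref{29} to express $\la T_n\gamma,T_ng\ra$ as a weighted sum of $\overline{\hat g(\ell)}\hat\gamma(\ell)$, take real parts, discard the non-negative $\ell\ge1$ terms, and use $|\chi_0(n)|^2=1/N$ to obtain the uniform strictly positive lower bound $\mathfrak{Re}(\overline{\hat g(0)}\hat\gamma(0))$.
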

\begin{proof}
    From the equality in  \eqref{29} we have:\begin{equation*}
        \mathfrak{Re}(\la T_n\gamma,T_ng\ra)=N\displaystyle\sum_{\ell=0}^{N-1}\mathfrak{Re}(\overline{\hat g(\ell)}\hat \gamma(\ell))|\chi_{\ell}^*(n)|^2.
    \end{equation*}
    Hence, if the condition \eqref{csuff1} holds, then\begin{equation*}
        \mathfrak{Re}(\la T_n\gamma,T_ng\ra)\geq N|\chi_0^*(n)|^2\mathfrak{Re}(\overline{\hat g(0)}\hat \gamma(0))=\mathfrak{Re}(\overline{\hat g(0)}\hat \gamma(0))>0.
    \end{equation*}
    This gives the desired result.
\end{proof}
\begin{remark} If the windows coincide $g=\gamma$ we obtain   condition  \eqref{C3}. Other sufficient conditions that can be obtained by simple modification in the previous proof  are:\begin{equation}\label{csuff1a}
     \mathfrak{Re}(\overline{\hat g(\ell)}\hat \gamma(\ell))\leq 0, \quad \forall \ell=1,\dots,N-1,\quad\mbox{and}\quad \mathfrak{Re}(\overline{\hat g(0)}\hat \gamma(0))<0;
    \end{equation}
   \begin{equation}\label{csuff1b}
        \mathfrak{Im}(\overline{\hat g(\ell)}\hat \gamma(\ell))\geq 0,\quad \forall \ell=1,\dots,N-1,\quad\mbox{and}\quad \mathfrak{Im}(\overline{\hat g(0)}\hat \gamma(0))>0;
    \end{equation}
    \begin{equation}\label{csuff1c}
      \mathfrak{Im}(\overline{\hat g(\ell)}\hat \gamma(\ell))\leq 0\quad \forall \ell=1,\dots,N-1,\quad\mbox{and}\quad \mathfrak{Im}(\overline{\hat g(0)}\hat \gamma(0))<0.
    \end{equation}
    These three conditions are trivially unsatisfied if $g=\gamma$. Of course, conditions  \eqref{csuff1b} and \eqref{csuff1c} do not  hold for real signals.
\end{remark}
\begin{lemma}\label{lemmasuff2}
    Given $g,\gamma\in\mathbb{C}^N$,  condition \begin{equation}\label{csuff2}
        |\hat g(0)+\hat\gamma(0)|>\sqrt{N}\mu\|\hat g-\hat \gamma\|_2,
    \end{equation}
   where $\mu$ is defined in \eqref{ele2}, implies that $\la T_ng,T_n\gamma\ra\ne0,\ \forall n=1,\dots,N$.
\end{lemma}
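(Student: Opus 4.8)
The plan is to prove the strictly stronger statement that $\mathfrak{Re}(\la T_n g, T_n\gamma\ra) > 0$ for every $n$, which immediately forces $\la T_n g, T_n\gamma\ra \neq 0$. The shape of hypothesis \eqref{csuff2} is the crucial clue: it couples the \emph{sum} $\hat g(0)+\hat\gamma(0)$ with the \emph{difference} $\hat g - \hat\gamma$. This suggests introducing the two auxiliary signals
\[
h := \frac{g+\gamma}{2}, \qquad d := g - \gamma,
\]
so that $g = h + \tfrac12 d$ and $\gamma = h - \tfrac12 d$.

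First I would expand, using linearity of $T_n$ and of the inner product,
\[
\la T_n g, T_n\gamma\ra = \la T_n h + \tfrac12 T_n d,\; T_n h - \tfrac12 T_n d\ra .
\]
The two diagonal terms give $\|T_n h\|_2^2 - \tfrac14\|T_n d\|_2^2$, while the two cross terms combine into $\tfrac12\bigl(\la T_n d, T_n h\ra - \la T_n h, T_n d\ra\bigr)$, which equals $i\,\mathfrak{Im}\,\la T_n d, T_n h\ra$ and is therefore purely imaginary. Taking real parts yields the key identity
\[
\mathfrak{Re}(\la T_n g, T_n\gamma\ra) = \|T_n h\|_2^2 - \tfrac14\|T_n d\|_2^2 .
\]
This is an analogue of the polarization identity, and it is precisely what lets the two complementary estimates proved above enter the argument from opposite sides.

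Next I would bound the two pieces. For the first I apply the lower bound \eqref{Lestimate} to $h$, using linearity of the GFT:
\[
\|T_n h\|_2 \ge |\hat h(0)| = \tfrac12\,|\hat g(0)+\hat\gamma(0)| .
\]
For the second I apply the upper bound \eqref{Uestimate} to $d$ together with Parseval (valid since $\{\chi_\ell\}$ is orthonormal, giving $\|d\|_2 = \|\hat d\|_2$):
\[
\|T_n d\|_2^2 \le N\mu^2\|d\|_2^2 = N\mu^2\|\hat g - \hat\gamma\|_2^2 .
\]
Substituting both into the key identity gives
\[
\mathfrak{Re}(\la T_n g, T_n\gamma\ra) \ge \tfrac14\,|\hat g(0)+\hat\gamma(0)|^2 - \tfrac14\,N\mu^2\|\hat g - \hat\gamma\|_2^2 ,
\]
and the right-hand side is strictly positive exactly when \eqref{csuff2} holds. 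Since the bound is uniform in $n$, this yields $\la T_n g, T_n\gamma\ra \neq 0$ for all $n=1,\dots,N$.

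The argument is short once the right decomposition is found, so the only genuine obstacle is spotting the midpoint/difference splitting $h=(g+\gamma)/2$, $d=g-\gamma$: the lower estimate \eqref{Lestimate} naturally controls the zeroth GFT coefficient of the \emph{average} window, while the upper estimate \eqref{Uestimate} naturally controls the full $\ell^2$-mass of the \emph{difference}, and \eqref{csuff2} is exactly the inequality making the first dominate the second. The two points worth checking carefully are that the cross terms are indeed purely imaginary (so they drop out of the real part) and that Parseval is invoked in the correct direction; both are routine.
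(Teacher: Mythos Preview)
Your proof is correct and is essentially the same as the paper's: the paper invokes the polarization identity to obtain $4\,\mathfrak{Re}\la T_n g,T_n\gamma\ra=\|T_n(g+\gamma)\|_2^2-\|T_n(g-\gamma)\|_2^2$, which is exactly your identity $\mathfrak{Re}\la T_n g,T_n\gamma\ra=\|T_n h\|_2^2-\tfrac14\|T_n d\|_2^2$ written with the unscaled sum and difference. Both proofs then finish identically by applying \eqref{Lestimate} to the sum and \eqref{Uestimate} (plus Parseval) to the difference.
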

\begin{proof}
    Fix $n\in\{1,\dots,N\}$. By applying the {polarization identity} in $\mathbb{C}^N$, equipped with the standard Hermitian product, we have:\begin{align*}
        \la T_ng,T_n\gamma\ra = & \frac{1}{4}\bigg( \|T_ng+T_n\gamma\|^2_2-\|T_ng-T_n\gamma\|^2_2+ i\|T_ng+iT_n\gamma\|^2_2-i\|T_ng-iT_n\gamma\|^2_2\bigg)\\
        = & \frac{1}{4}\bigg( \|T_n(g+\gamma)\|^2_2-\|T_n(g-\gamma)\|^2_2+ i\big(\|T_ng+iT_n\gamma\|^2_2-\|T_ng-iT_n\gamma\|^2_2\big)\bigg).
    \end{align*}
    So \begin{align*}
        4\mathfrak{Re} (\la T_ng,T_n\gamma\ra)=\ & \|T_n(g+\gamma)\|^2_2-\|T_n(g-\gamma)\|^2_2   \geq |\hat g(0)+\hat\gamma(0)|^2-N\mu^2\|g-\gamma\|^2_2\\
        =\ & |\hat g(0)+\hat\gamma(0)|^2-N\mu^2\|\hat g-\hat \gamma\|^2_2.
    \end{align*}
   The minorization above is obtained using the inequalities \eqref{Uestimate} and \eqref{Lestimate}.
    Hence, if \eqref{csuff2} holds, then $\mathfrak{Re}\la T_ng,T_n\gamma\ra\ne 0$, so $\la T_ng,T_n\gamma\ra\ne 0$ for every $n=1,\dots, N$. This concludes the proof.
\end{proof}

\begin{remark}
  (i)  If $g=\gamma$, condition \eqref{csuff2} is equivalent to \eqref{C3}.\\
  (ii)  Roughly speaking, condition \eqref{csuff2}
  requires that the sum signal $g+\gamma$ is \emph{concentrated} along the direction of the first Laplacian eigenvector $\chi_0$ and an higher concentration is needed as the distance between $g$ and $\gamma$ increases.
\end{remark}

\noindent
\textbf{Frame Property}.
We first recall the definition of a frame in a separable Hilbert space.

Let $\mathcal{H}$ be a separable Hilbert space and $J$ an at most countable index set.

    We say that a sequence $\{f_n\}_{n \in J} \subset \mathcal{H}$ is a frame for $\mathcal{H}$ if there exist constants $A, B > 0$ such that
    \[
    A \|f\|^2 \leq \sum_{n \in J} |\langle f, f_n \rangle|^2 \leq B \|f\|^2,\quad \forall f\in  \mathcal{H}.
    \]
   The constants $A$ and $B$ are called the \emph{frame bounds}.
    A sequence $\{g_n\}_{n \in J} \subset \mathcal{H}$ is called a dual frame of the frame $\{f_n\}_{n \in J}$ if, for every $f \in \mathcal{H}$,
\begin{equation}\label{dualfr}
    f = \sum_{n \in J} \langle f, f_n \rangle g_n = \sum_{n \in J} \langle f, g_n \rangle f_n.
\end{equation}
    If the sequence $\{f_n\}_{n \in J}$ is a frame, an example of dual frame is the so-called \emph{canonical dual frame}
    \[
    g_n = S^{-1} f_n,
    \]
    where $S$ is the frame operator (positive and invertible) defined by
    \[
    Sf = \sum_{n \in J} \langle f, f_n \rangle f_n.
    \]
Note that, if $\mathcal{H}$ is
a finite-dimensional space, $\{f_n\}_{n \in J}$ is a frame if and only if
the vectors $\{f_n\}_{n \in J}$ will span $\mathcal{H}.$
Observe that \eqref{dualfr} means that the vectors $\{g_n\}_{n \in J}$ will span
 $\mathcal{H}$ whenever $\{f_n\}_{n \in J}$ spans. This implies that if \( \{f_n\}_{n \in J} \) is a frame for \( \mathcal{H} \), then any of its dual frames is also a frame for \( \mathcal{H} \).

 \begin{theorem}\label{impl1}
If  $g,\gamma\in\bC^N,$ verify condition \eqref{e3}, then
$$\{g_{n,k}\}_{n=1,\dots,N;k=0,\dots,N-1},\quad\text{and}\quad\{\gamma_{n,k}\}_{n=1,\dots,N;k=0,\dots,N-1}$$
are frames for \( \mathbb{C}^N \). Moreover, each frame is the dual of the other.
 \end{theorem}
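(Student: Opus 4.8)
The plan is to reduce everything to the reconstruction identity of Theorem~\ref{RecTeo1}, exploiting the fact recalled above that in the finite-dimensional space $\mathbb{C}^N$ a family is a frame \emph{if and only if} it spans $\mathbb{C}^N$, equivalently its analysis operator is injective. Thus it suffices to prove that no nonzero signal is orthogonal to all the atoms of each family, and then to read off the duality directly from \eqref{recformula1}.

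First I would establish the frame property of $\{g_{n,k}\}$. Condition \eqref{e3} guarantees that every denominator $N\la T_i\gamma,T_ig\ra$ in \eqref{recformula1} is nonzero, so the reconstruction formula is valid for all $f\in\mathbb{C}^N$. Suppose now that $S_f(n,k)=\la f,g_{n,k}\ra=0$ for every $n\in\{1,\dots,N\}$ and $k\in\{0,\dots,N-1\}$. Substituting into \eqref{recformula1}, the entire right-hand side vanishes, whence $f(i)=0$ for all $i$ and therefore $f=0$. Hence the analysis operator associated with $\{g_{n,k}\}$ is injective, the family spans $\mathbb{C}^N$, and it is a frame.

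Next I would treat $\{\gamma_{n,k}\}$. The key observation is that condition \eqref{e3} is symmetric in the two windows, since $\la T_n g,T_n\gamma\ra=\overline{\la T_n\gamma,T_n g\ra}$, so $\la T_n\gamma,T_n g\ra\neq 0$ is equivalent to $\la T_n g,T_n\gamma\ra\neq 0$. Consequently Theorem~\ref{RecTeo1} applies verbatim with the roles of $g$ and $\gamma$ exchanged, producing a reconstruction of any $f$ from the coefficients $\la f,\gamma_{n,k}\ra$ in terms of the atoms $g_{n,k}$; the same injectivity argument then shows that $\{\gamma_{n,k}\}$ spans $\mathbb{C}^N$ and is a frame. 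Alternatively, once $\{g_{n,k}\}$ is known to be a frame, the formula \eqref{recformula1} exhibits $\{\gamma_{n,k}\}$ as a dual of it, and a dual of a frame is automatically a frame, as recalled above.

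Finally, the duality assertion is read off from \eqref{recformula1}, which has the shape of the reproducing relation \eqref{dualfr}: it recovers each $f$ from its analysis coefficients $\la f,g_{n,k}\ra$ using the synthesis atoms $\gamma_{n,k}$, while swapping the windows yields the companion formula built on $\la f,\gamma_{n,k}\ra$ and $g_{n,k}$. The one delicate point — and the main obstacle — is the vertex-dependent normalization $\bigl(N\la T_i\gamma,T_ig\ra\bigr)^{-1}$ in \eqref{recformula1}, which prevents it from coinciding literally with the unweighted relation \eqref{dualfr}. To obtain the standard dual-frame identity one absorbs this factor into the synthesis family, equivalently one works under a normalization such as \eqref{traslcost} ensuring $N\la T_i\gamma,T_ig\ra\equiv 1$; in that regime \eqref{recformula1} becomes $f=\sum_{n,k}\la f,g_{n,k}\ra\gamma_{n,k}$, and taking adjoints of this identity immediately gives the companion relation $f=\sum_{n,k}\la f,\gamma_{n,k}\ra g_{n,k}$, so the two families are mutually dual. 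Tracking this normalization, and thereby identifying the precisely normalized family that serves as the standard dual, is the only nontrivial bookkeeping; the frame property itself is immediate from the injectivity argument.
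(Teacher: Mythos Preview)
Your argument for the frame property is correct and takes a genuinely different, more elementary route than the paper. The paper computes the frame sum explicitly: it proves the identity
\[
\sum_{n=1}^{N}\sum_{k=0}^{N-1}|\langle f,g_{n,k}\rangle|^{2}=N\sum_{i=1}^{N}|f(i)|^{2}\|T_ig\|_{2}^{2},
\]
via Parseval and a normality argument for the matrix $A_{i,n}=(T_ng)(i)$, and then bounds $\|T_ig\|_2$ above and below using \eqref{e3} and Cauchy--Schwarz. This yields explicit frame bounds $A=Na^2$ and $B=Nb^2$ with $a=\min_i|\langle T_i\gamma,T_ig\rangle|/\|T_i\gamma\|_2$ and $b=\max_i\|T_ig\|_2$. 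Your soft injectivity argument, by contrast, invokes only the reconstruction formula and the finite-dimensional equivalence ``frame $\Leftrightarrow$ spanning'', which is quicker but gives no quantitative bounds. Both are valid; the paper's version is more informative numerically.

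On the duality claim you are more careful than the paper. The paper simply asserts that $\{\gamma_{n,k}\}$ ``is the dual frame by the reconstruction formula \eqref{recformula1}'', but as you correctly point out, \eqref{recformula1} carries the vertex-dependent weight $(N\langle T_i\gamma,T_ig\rangle)^{-1}$ and therefore does not literally match the standard dual-frame relation \eqref{dualfr}. Your diagnosis is right: one must either absorb the weight into a rescaled synthesis family or impose a normalization such as \eqref{traslcost}; otherwise the word ``dual'' is being used in a generalized (weighted) sense. This is a genuine imprecision in the statement that the paper itself does not address, so your caveat is well placed rather than a defect of your proof.
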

\begin{proof} Condition \eqref{e3} implies
\begin{equation*}
	0\neq |\la T_i\gamma, T_i g\ra|\leq \|T_i\gamma\|_2\;\|T_i g\|_2,\quad i\in\{1,\,\dots,N\},
\end{equation*}
so that
\begin{equation}\label{causw}
	0<a:=\min_{i \in \{1, \dots, N\}} \frac{|\la T_i\gamma, T_i g\ra|}{\|T_i\gamma\|_2}\leq \|T_i g\|_2\leq \max_{i \in \{1, \dots, N\}}\|T_i g\|_2=:b.
\end{equation}
Let us now show the equality:
\begin{equation}\label{eq:25-corrected}
	\sum_{n=1}^{N}\sum_{k=0}^{N-1}\big|\langle f,g_{n,k}\rangle\big|^{2}
	= N\sum_{i=1}^{N}|f(i)|^{2}\,\|T_i g\|_{2}^{2}.
\end{equation}

	Using that the inner product is conjugate-linear in the second entry,
	\begin{align*}
		\langle f,g_{n,k}\rangle
		&=\sum_{i=1}^N f(i)\,\overline{g_{n,k}(i)} =\sum_{i=1}^N f(i)\,\overline{\sqrt{N}\,(T_ng)(i)\chi_k(i)} \\
		&=\sqrt{N}\sum_{i=1}^N f(i)\,\overline{(T_ng)(i)}\,\overline{\chi_k(i)}.
	\end{align*}
	For fixed \(n\), define
	\[
	h_n(i):=f(i)\,\overline{(T_ng)(i)},\qquad i=1,\dots,N.
	\]
	Then
	\[
	\langle f,g_{n,k}\rangle
	=\sqrt{N}\sum_{i=1}^N h_n(i)\,\overline{\chi_k(i)}
	=\sqrt{N}\,\langle h_n,\chi_k\rangle.
	\]
	Therefore,
	\begin{align*}
		\sum_{k=0}^{N-1}\big|\langle f,g_{n,k}\rangle\big|^2
		=\sum_{k=0}^{N-1}\Big|\sqrt{N}\,\langle h_n,\chi_k\rangle\Big|^2
		=N\sum_{k=0}^{N-1}\big|\langle h_n,\chi_k\rangle\big|^2 =N\|h_n\|_2^2,
	\end{align*}
	where the last step follows from Parseval's equality since \(\{\chi_k\}_{k=0}^{N-1}\) is an orthonormal basis.
		Next,
	\begin{align*}
		\|h_n\|_2^2
		&=\sum_{i=1}^N |h_n(i)|^2
		=\sum_{i=1}^N |f(i)|^2\,|(T_ng)(i)|^2.
	\end{align*}
	Hence,
	\[
	\sum_{k=0}^{N-1}\big|\langle f,g_{n,k}\rangle\big|^2
	=N\sum_{i=1}^N |f(i)|^2\,|(T_ng)(i)|^2.
	\]
	Summing over \(n=1,\dots,N\) and exchanging the finite sums gives
	\begin{align*}
		\sum_{n=1}^N\sum_{k=0}^{N-1}\big|\langle f,g_{n,k}\rangle\big|^2
		=\sum_{n=1}^N N\sum_{i=1}^N |f(i)|^2\,|(T_ng)(i)|^2
		=N\sum_{i=1}^N |f(i)|^2\sum_{n=1}^N |(T_ng)(i)|^2.
	\end{align*}
	
	It remains to show that, for each fixed \(i\),
	\[
	\sum_{n=1}^N |(T_ng)(i)|^2=\|T_ig\|_2^2.
	\]
	Define the \(N\times N\) matrix \(A\) by
	\[
	A_{i,n}:=(T_ng)(i).
	\]
	Then \(\sum_{n=1}^N |(T_ng)(i)|^2\) is the squared \(\ell^2\)-norm of the \(i\)-th row of \(A\),
	while \(\|T_ig\|_2^2=\sum_{m=1}^N |(T_ig)(m)|^2\) is the squared \(\ell^2\)-norm of the \(i\)-th column of \(A\).
	
	Using the translation formula
	\[
	(T_ng)(i)=\sqrt{N}\sum_{\ell=0}^{N-1}\widehat g(\ell)\,\overline{\chi_\ell(n)}\,\chi_\ell(i),
	\]
	we can write
	\[
	A=\sqrt{N}\,U\,\operatorname{diag}(\hat{g})
	\,U^*,
	\]
	where $U=[\chi_0,\ldots,\chi_{N-1}]$ denotes the orthonormal
	matrix of Laplacian eigenvectors, and $\operatorname{diag}(\hat{g})$ is the diagonal matrix having the entries of $\hat{g}$ on the principal diagonal.
	In particular, \(A\) is normal, so \(AA^*=A^*A\).
	Taking the \((i,i)\)-entry yields
	\[
	(AA^*)_{i,i}=(A^*A)_{i,i}.
	\]
	But
	\[
	(AA^*)_{i,i}=\sum_{n=1}^N |A_{i,n}|^2=\sum_{n=1}^N |(T_ng)(i)|^2,
	\qquad
	(A^*A)_{i,i}=\sum_{m=1}^N |A_{m,i}|^2=\sum_{m=1}^N |(T_ig)(m)|^2=\|T_ig\|_2^2.
	\]
	Hence \(\sum_{n=1}^N |(T_ng)(i)|^2=\|T_ig\|_2^2\), as claimed.
	
	Substituting this identity back gives
	\[
	\sum_{n=1}^N\sum_{k=0}^{N-1}\big|\langle f,g_{n,k}\rangle\big|^2
	= N\sum_{i=1}^N |f(i)|^2\,\|T_ig\|_2^2.
	\]
Finally, the estimates \eqref{causw} yield
	\[
a^2N\|f\|_2^2\leq \sum_{n=1}^N\sum_{k=0}^{N-1}\big|\langle f,g_{n,k}\rangle\big|^2\leq
b^2N\|f\|_2^2.
\]
Therefore,  the  sequence $\{g_{n,k}\}_{n=1,\dots,N;k=0,\dots,N-1}$ is a frame for $\bC^N,$ with frame bounds $A=a^2 N$ and $B=b^2N$.
The same argument with the window $\gamma$ in place of $g$ gives that the sequence $\{\gamma_{n,k}\}_{n=1,\dots,N;k=0,\dots,N-1}$
forms a frame as well, which is the dual frame by the reconstruction formula \eqref{recformula1}.
\end{proof}

\section{Multi-windows reconstruction}\label{sec:MWGFT}
    The original construction of Gabor \cite{Gabor1946} was motivated by the need for localized spectral analysis of nonstationary signals such as speech. His solution, an exponential kernel
    modulated by a Gaussian window, provided an effective representation for such signals. For other classes of nonstationary data, however, it may be advantageous to employ kernels different from the Gaussian-modulated exponential.
    A central limitation arises from the uncertainty principle: optimizing a single window
    necessarily restricts the achievable balance between time and frequency resolution.
    A natural way to mitigate this limitation is to generalize the scheme by employing multiple window functions simultaneously. For example, by combining both narrow and
    wide Gaussian windows one can achieve higher resolution in time and in frequency at once.
    More generally, a multiwindow approach allows the inclusion of windows of different
    functional types, chosen according to the local features of the signals under
    investigation.

    From a theoretical standpoint, the multiwindow Gabor construction leads to families
    of atoms whose completeness and frame properties can be analyzed in the Zak transform
    domain. The eigenvalues of the associated matrix-valued function control the frame
    bounds, making it possible to characterize tight frames and to identify dual frames
    explicitly. Thus, the use of multiple windows is motivated both by practical needs, flexible
    resolution and adaptability to diverse signal classes, and by theoretical considerations,
    since it provides a richer and more robust frame structure than the single-window case, cf. \cite{Zeevi1998}, \cite{ZibulskiZeevi1997}.

In Theorem \ref{RecTeo1} the signal is reconstructed through $$\{g_{n,k}\}_{n=1,\dots,N;k=0,\dots,N-1}\quad\mbox{and}\quad\{\gamma_{n,k}\}_{n=1,\dots,N;k=0,\dots,N-1},$$ which are families obtained by modulating and translating a single window (g in the first case and $\gamma$ in the second one).

A reconstruction formula can be found also when the families above are obtained starting from an arbitrary number $J\in\{1,2,\dots\}$ of windows. This multi-windowed reconstruction have been also derived in previous works using frame-theoretic arguments. Specifically, Zheng et al. \cite{ZHENG202135} provide reconstruction formulas based on the canonical dual of multi-windowed graph Fourier frames, under global spectral frame conditions. The next theorem presents an explicit reconstruction formula for systems generated by multiple analysis windows and distinct synthesis windows, relying on a local vertex-wise condition and avoiding the use of canonical dual frames or global tight frame assumptions.

\begin{theorem}\label{Recteo2}
    Let $J\in\{1,2,\dots\}$, $g^1,\dots,g^J,\gamma^1,\dots,\gamma^J\in\bC^N$. If \begin{equation}\label{e4}
        \displaystyle\sum_{j=1}^J \la T_i\gamma^j,T_ig^j\ra\ne 0,\quad \forall i=1,\dots,N,
    \end{equation}
    then, for any $f\in\bC^N$, \begin{equation}\label{recformula2}
        f(i)=\frac{1}{N \displaystyle\sum_{j=1}^J \la T_i\gamma^{j},T_ig^{j}\ra}\sum_{j=1}^J\sum_{n=1}^N \sum_{k=0}^{N-1} \la f,g^{j}_{n, k}\ra \gamma^{j}_{n,k}(i),\quad \forall i=1,\dots N.
    \end{equation}
\end{theorem}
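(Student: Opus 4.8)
The plan is to reduce the multi-window statement to the single-window identity already established inside the proof of Theorem~\ref{RecTeo1}. The essential observation is that the computation carried out there produces, \emph{before} invoking the non-degeneracy hypothesis \eqref{e3}, the unconditional identity
\[
\sum_{n=1}^N \sum_{k=0}^{N-1} \langle f, g_{n,k}\rangle\, \gamma_{n,k}(i) = N f(i)\, \langle T_i\gamma, T_i g\rangle, \qquad i=1,\dots,N,
\]
valid for every pair of windows $g,\gamma \in \mathbb{C}^N$ and every $f \in \mathbb{C}^N$; condition \eqref{e3} enters there only at the very last step, to divide by the scalar $\langle T_i\gamma, T_i g\rangle$. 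I would therefore first isolate this identity as the reusable algebraic core, stressing that it holds irrespective of the value of $\langle T_i\gamma, T_i g\rangle$.

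Next I would apply that identity separately to each index $j=1,\dots,J$, taking $g=g^j$ and $\gamma=\gamma^j$, which yields
\[
\sum_{n=1}^N \sum_{k=0}^{N-1} \langle f, g^j_{n,k}\rangle\, \gamma^j_{n,k}(i) = N f(i)\, \langle T_i\gamma^j, T_i g^j\rangle.
\]
Summing these $J$ equalities over $j$ and factoring out $N f(i)$, which does not depend on $j$, gives
\[
\sum_{j=1}^J \sum_{n=1}^N \sum_{k=0}^{N-1} \langle f, g^j_{n,k}\rangle\, \gamma^j_{n,k}(i) = N f(i) \sum_{j=1}^J \langle T_i\gamma^j, T_i g^j\rangle.
\]

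Finally, hypothesis \eqref{e4} guarantees that $\sum_{j=1}^J \langle T_i\gamma^j, T_i g^j\rangle \neq 0$ for each vertex $i$, so I may divide both sides by $N \sum_{j=1}^J \langle T_i\gamma^j, T_i g^j\rangle$ to obtain exactly \eqref{recformula2}. I expect no genuine obstacle here beyond bookkeeping: the one point requiring care is to confirm that the inner double sum in the proof of Theorem~\ref{RecTeo1} collapses to $N f(i)\langle T_i\gamma, T_i g\rangle$ \emph{regardless} of whether that inner product vanishes, so that the per-window identity is genuinely unconditional and summation over $j$ is legitimate. Granting this, the interchange of the finite sums over $j$, $n$, and $k$ is immediate, and hypothesis \eqref{e4} is precisely what is needed to carry out the final division.
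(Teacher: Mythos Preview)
Your proposal is correct and follows essentially the same route as the paper: invoke the per-window identity established in the proof of Theorem~\ref{RecTeo1}, sum over $j$, and then divide using hypothesis~\eqref{e4}. Your explicit remark that the identity $\sum_{n,k}\langle f,g_{n,k}\rangle\gamma_{n,k}(i)=Nf(i)\langle T_i\gamma,T_ig\rangle$ holds unconditionally (i.e., without assuming~\eqref{e3} for each individual pair $(g^j,\gamma^j)$) is exactly the right subtlety to flag, and the paper's proof tacitly relies on the same observation.
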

\begin{proof}
From Theorem \ref{RecTeo1} we have that for every $j=1,\dots,J$:\begin{equation*}
    Nf(i) \la T_i\gamma^{j},T_ig^{j}\ra=\sum_{n=1}^N \sum_{k=0}^{N-1} \la f,g^{j}_{n, k}\ra \gamma^{j}_{n,k}(i).
\end{equation*}
By summing over $j$ we obtain:\begin{equation*}
    Nf(i)\displaystyle\sum_{j=1}^J \la T_i\gamma^{j},T_ig^{j}\ra=\sum_{j=1}^J\sum_{n=1}^N \sum_{k=0}^{N-1} \la f,g^{j}_{n, k}\ra \gamma^{j}_{n,k}(i),
\end{equation*}
as desired.
\end{proof}

Conditions \eqref{csuff1} and \eqref{csuff2} can be generalized to the multi-window case.

\begin{lemma}\label{lemmasuff3}
     Let $J\in\{1,2,\dots\}$, $g^1,\dots,g^J,\gamma^1,\dots,\gamma^J\in\bC^N$. The condition \begin{equation}\label{csuff3}
         \sum_{j=1}^J \mathfrak{Re}(\overline{\hat g^j(\ell)}\hat\gamma^j(\ell))\geq 0,\ \forall \ell=1,\dots,N-1,\quad\mbox{and}\quad \sum_{j=1}^J\mathfrak{Re}(\overline{\hat g^j(0)}\hat\gamma^j(0))>0,
     \end{equation}
     implies that
     \begin{equation}\label{multicond}
     \displaystyle\sum_{j=1}^J \la T_i\gamma^j,T_ig^j\ra\ne 0,\quad \forall i=1,\dots,N.
     \end{equation}
\end{lemma}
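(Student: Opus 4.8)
The plan is to reproduce the proof of Lemma~\ref{lemmasuff1} almost verbatim, carrying the extra summation over the window index $j$ through the argument. The starting point is the spectral identity \eqref{29}, applied to the ordered pair $(\gamma^j,g^j)$ at the vertex $i$: for each $j=1,\dots,J$,
\[
\la T_i\gamma^j, T_ig^j\ra = N\sum_{\ell=0}^{N-1} \overline{\hat g^j(\ell)}\,\hat\gamma^j(\ell)\,|\chi_\ell^*(i)|^2 .
\]
Everything then reduces to a real-part estimate aggregated over $j$.

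First I would take real parts and sum over $j$. Since both sums are finite, I may interchange the summation over $j$ with the summation over $\ell$, obtaining
\[
\mathfrak{Re}\!\left(\sum_{j=1}^J \la T_i\gamma^j, T_ig^j\ra\right)
= N\sum_{\ell=0}^{N-1}\Bigg(\sum_{j=1}^J \mathfrak{Re}\big(\overline{\hat g^j(\ell)}\,\hat\gamma^j(\ell)\big)\Bigg)\,|\chi_\ell^*(i)|^2 .
\]
Next I would isolate the $\ell=0$ term. Under hypothesis \eqref{csuff3}, each inner sum with $\ell\geq 1$ is nonnegative and is multiplied by the nonnegative factor $|\chi_\ell^*(i)|^2$, so all such terms may be discarded from below; the $\ell=0$ inner sum is strictly positive by the second part of \eqref{csuff3}. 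Using $|\chi_0(i)|^2=1/N$ for every $i$, the prefactor $N$ cancels against this term, giving
\[
\mathfrak{Re}\!\left(\sum_{j=1}^J \la T_i\gamma^j, T_ig^j\ra\right)
\geq \sum_{j=1}^J \mathfrak{Re}\big(\overline{\hat g^j(0)}\,\hat\gamma^j(0)\big) > 0 .
\]
A complex number with strictly positive real part is nonzero, which establishes \eqref{multicond} for every $i=1,\dots,N$.

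I do not expect any genuine obstacle: the entire argument is a direct extension of Lemma~\ref{lemmasuff1}, the only new ingredients being the interchange of the two finite sums and the fact that \eqref{csuff3} is stated precisely on the \emph{aggregated} spectral quantities $\sum_{j}\mathfrak{Re}(\overline{\hat g^j(\ell)}\hat\gamma^j(\ell))$, so that the termwise sign control survives the summation over $j$. The one point requiring mild care is the conjugation convention: one must feed \eqref{29} the ordered pair $(\gamma^j,g^j)$ to match the inner products in \eqref{e4}, though since $\mathfrak{Re}(z)=\mathfrak{Re}(\bar z)$ this choice is immaterial for the real-part lower bound.
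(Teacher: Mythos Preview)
Your argument is correct and essentially identical to the paper's own proof: both apply the spectral identity \eqref{29} to each pair $(\gamma^j,g^j)$, sum over $j$, take real parts, discard the nonnegative $\ell\geq 1$ contributions under \eqref{csuff3}, and use $|\chi_0(i)|^2=1/N$ to obtain the strict lower bound. Your additional remark on the conjugation convention is a sound observation but, as you note, irrelevant for the real-part estimate.
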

\begin{proof}
Fix an index $i\in\{1,\dots,N\}$. By following the same scheme of the proof of Lemma \ref{lemmasuff1} and summing over $j$:
    \begin{align*}
    \mathfrak{Re}\bigg(\displaystyle\sum_{j=1}^J \la T_i\gamma^j,T_ig^j\ra\bigg)=& N\mathfrak{Re}\bigg(\sum_{j=1}^J\sum_{\ell=0}^{N-1}\overline{\hat g^j(\ell)}\hat\gamma^j(\ell)|\chi^*_{\ell}(i)|^2\bigg)  =N\sum_{\ell=0}^{N-1}\bigg(\sum_{j=1}^J\mathfrak{Re}(\overline{\hat g^j(\ell)}\hat\gamma^j(\ell))\bigg)|\chi^*_{\ell}(n)|^2\\
    \geq & N\sum_{j=1}^J\mathfrak{Re}(\overline{\hat g^j(0)}\hat\gamma^j(0))|\chi^*_0(n)|^2
    = \sum_{j=1}^J\mathfrak{Re}(\overline{\hat g^j(0)}\hat\gamma^j(0))>0,
    \end{align*}
    and condition \eqref{multicond} follows.
\end{proof}

\begin{lemma}\label{lemmasuff4}
 Let $J\in\{1,2,\dots\}$, $g^1,\dots,g^J,\gamma^1,\dots,\gamma^J\in\bC^N$. If \begin{equation}\label{csuff4}
     \sum_{j=1}^J|\hat g^j(0)+\hat\gamma^j(0)|^2-N\mu^2\|\hat g^j-\hat\gamma^j\|_2^2>0,
 \end{equation}
 then  condition \eqref{multicond} is verified.
\end{lemma}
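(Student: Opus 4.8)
The plan is to mimic the single-window argument of Lemma \ref{lemmasuff2}, applying the polarization identity to each pair $(g^j,\gamma^j)$ separately and then summing over $j$. First I would fix an index $i\in\{1,\dots,N\}$ and note that, since taking real parts is insensitive to conjugation, $\mathfrak{Re}(\la T_i\gamma^j,T_ig^j\ra)=\mathfrak{Re}(\la T_ig^j,T_i\gamma^j\ra)$; hence the order of the two windows is irrelevant for the quantity we need to control.

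Next, for each $j$ I would invoke the polarization identity exactly as in the proof of Lemma \ref{lemmasuff2}, obtaining
\[
4\,\mathfrak{Re}(\la T_i\gamma^j,T_ig^j\ra)=\|T_i(g^j+\gamma^j)\|_2^2-\|T_i(g^j-\gamma^j)\|_2^2 .
\]
Then I would bound the first term from below via \eqref{Lestimate} applied to $g^j+\gamma^j$, giving $\|T_i(g^j+\gamma^j)\|_2^2\ge|\hat g^j(0)+\hat\gamma^j(0)|^2$, and bound the second term from above via \eqref{Uestimate} applied to $g^j-\gamma^j$ together with Parseval, giving $\|T_i(g^j-\gamma^j)\|_2^2\le N\mu^2\|g^j-\gamma^j\|_2^2=N\mu^2\|\hat g^j-\hat\gamma^j\|_2^2$. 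This yields the per-term lower bound
\[
4\,\mathfrak{Re}(\la T_i\gamma^j,T_ig^j\ra)\ge|\hat g^j(0)+\hat\gamma^j(0)|^2-N\mu^2\|\hat g^j-\hat\gamma^j\|_2^2 .
\]

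Finally, I would sum this inequality over $j=1,\dots,J$, exploiting the additivity of $\mathfrak{Re}$, to get
\[
4\,\mathfrak{Re}\!\left(\sum_{j=1}^J\la T_i\gamma^j,T_ig^j\ra\right)\ge\sum_{j=1}^J\left(|\hat g^j(0)+\hat\gamma^j(0)|^2-N\mu^2\|\hat g^j-\hat\gamma^j\|_2^2\right),
\]
whose right-hand side is strictly positive precisely by hypothesis \eqref{csuff4}. Consequently the real part of $\sum_{j=1}^J\la T_i\gamma^j,T_ig^j\ra$ is positive, so the sum cannot vanish; as $i$ was arbitrary, this is exactly \eqref{multicond}.

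I do not expect a genuine obstacle: the proof is a linear-in-$j$ superposition of the single-window estimate, and the inequalities \eqref{Lestimate} and \eqref{Uestimate} act on each window independently, so no cross-terms between distinct windows arise. The only subtlety worth flagging is that an individual summand $|\hat g^j(0)+\hat\gamma^j(0)|^2-N\mu^2\|\hat g^j-\hat\gamma^j\|_2^2$ need not be nonnegative — it is only their aggregate that \eqref{csuff4} controls — so one must sum the term-by-term bounds \emph{before} concluding positivity, rather than trying to argue positivity of each term separately.
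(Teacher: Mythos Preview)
Your proposal is correct and follows essentially the same route as the paper: apply the per-window estimate from the proof of Lemma~\ref{lemmasuff2} to each pair $(g^j,\gamma^j)$ and sum over $j$. Your explicit handling of the order $\la T_i\gamma^j,T_ig^j\ra$ versus $\la T_ig^j,T_i\gamma^j\ra$ via $\mathfrak{Re}$ is a nice touch that the paper leaves implicit.
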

\begin{proof}
    As before, fix $i\in\{1,\dots,N\}$. From the proof of Lemma \ref{lemmasuff2} we know that:\begin{equation*}
        4\mathfrak{Re} (\la T_ig^j,T_i\gamma^j\ra)\geq |\hat g^j(0)+\hat\gamma^j(0)|^2-N\mu^2\|\hat g^j-\hat \gamma^j\|^2_2,\quad \forall j=1,\dots,J.
    \end{equation*}
    So, by summing over $j$ we immediately get the thesis.
\end{proof}

\begin{remark}
 A stronger sufficient condition can be deducted from \eqref{csuff4}. Namely,  if there exists $\ \bar j\in\{1,\dots,J\}\ s.t.$:\begin{equation}\label{csufff5}
     |\hat g^j(0)+\hat\gamma^j(0)|\geq\sqrt{N}\mu\|\hat g^j-\hat\gamma^j\|_2,\ \forall j\ne \bar j;\quad\mbox{and}\quad |\hat g^{\bar j}(0)+\hat\gamma^{\bar j}(0)|>\sqrt{N}\mu\|\hat g^{\bar j}-\hat\gamma^{\bar j}\|_2,
 \end{equation}
 then  \eqref{multicond} holds true.
\end{remark}
Finally, arguing as in Theorem \ref{impl1}, we obtain
 \begin{theorem}\label{implmulti}
  Let $J\in\{1,2,\dots\}$, $g^1,\dots,g^J,\gamma^1,\dots,\gamma^J\in\bC^N$. If they  verify condition \eqref{e4}, then
	$$\{g^j_{n,k}\}_{n=1,\dots,N;k=0,\dots,N-1,j=1,\dots,J},\quad\text{and}\quad\{\gamma^j_{n,k}\}_{n=1,\dots,N;k=0,\dots,N-1,j=1,\dots,J},$$
	are frames for \( \mathbb{C}^N \). Moreover, each frame is the dual of the other.
\end{theorem}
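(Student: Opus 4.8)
The plan is to mirror the proof of Theorem~\ref{impl1}, reducing the multi-window claim to a per-window energy identity that is then summed over $j$. For each fixed $j\in\{1,\dots,J\}$, the single-window computation already carried out for \eqref{eq:25-corrected}, applied verbatim to the window $g^j$, gives
\[
\sum_{n=1}^{N}\sum_{k=0}^{N-1}\big|\la f,g^j_{n,k}\ra\big|^2 = N\sum_{i=1}^N |f(i)|^2\,\|T_i g^j\|_2^2.
\]
Summing over $j=1,\dots,J$ and interchanging the (finite) sums produces the master identity
\[
\sum_{j=1}^J\sum_{n=1}^{N}\sum_{k=0}^{N-1}\big|\la f,g^j_{n,k}\ra\big|^2 = N\sum_{i=1}^N |f(i)|^2\,\sigma_i,\qquad \sigma_i:=\sum_{j=1}^J\|T_i g^j\|_2^2.
\]
Everything then reduces to bounding the vertex-wise quantity $\sigma_i$ above and below, uniformly in $i$.

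The upper bound is immediate: with $b^2:=\max_{i}\sigma_i$, a maximum over finitely many nonnegative reals, the master identity yields the Bessel bound $B=b^2N$. The genuinely new difficulty, and the main obstacle, is the lower bound. In the two-window case positivity of $\|T_ig\|_2$ followed directly from \eqref{e3}; here hypothesis \eqref{e4} only controls the \emph{sum} $\sum_j\la T_i\gamma^j,T_ig^j\ra$, so individual norms $\|T_ig^j\|_2$ may vanish. The remedy is a triangle inequality followed by two applications of Cauchy--Schwarz, first in $\bC^N$ termwise and then across the window index in $\bR^J$:
\[
0<\Big|\sum_{j=1}^J\la T_i\gamma^j,T_ig^j\ra\Big|\le \sum_{j=1}^J \|T_i\gamma^j\|_2\,\|T_i g^j\|_2 \le \Big(\sum_{j=1}^J\|T_i\gamma^j\|_2^2\Big)^{1/2}\sigma_i^{1/2}.
\]
This already shows $\sigma_i>0$ for every $i$, and setting
\[
a:=\min_{i\in\{1,\dots,N\}}\frac{\big|\sum_{j=1}^J\la T_i\gamma^j,T_ig^j\ra\big|}{\big(\sum_{j=1}^J\|T_i\gamma^j\|_2^2\big)^{1/2}}>0
\]
gives the uniform lower bound $a^2\le\sigma_i$. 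Inserting $a^2\le\sigma_i\le b^2$ into the master identity produces $a^2N\|f\|_2^2\le \sum_{j,n,k}|\la f,g^j_{n,k}\ra|^2\le b^2N\|f\|_2^2$, so $\{g^j_{n,k}\}$ is a frame with bounds $A=a^2N$ and $B=b^2N$.

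Since \eqref{e4} is unchanged (up to complex conjugation, which does not affect non-vanishing) when $g^j$ and $\gamma^j$ are interchanged, the identical argument shows that $\{\gamma^j_{n,k}\}$ is also a frame for $\bC^N$. The duality between the two systems is then read directly off the exact reconstruction formula \eqref{recformula2} of Theorem~\ref{Recteo2}, exactly as in Theorem~\ref{impl1}: it expresses every $f$ as a superposition of the $\gamma^j_{n,k}$ with analysis coefficients $\la f,g^j_{n,k}\ra$, which is the defining relation \eqref{dualfr} of a dual pair. I expect this last step to be routine, the only point worth flagging being the vertex-dependent normalization $1/(N\sum_j\la T_i\gamma^j,T_ig^j\ra)$ already built into \eqref{recformula2}.
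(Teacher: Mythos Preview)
Your proposal is correct and is precisely the natural adaptation of the proof of Theorem~\ref{impl1} that the paper invokes; the paper itself gives no further argument beyond the words ``arguing as in Theorem~\ref{impl1}''. Your double Cauchy--Schwarz step (first in $\bC^N$, then across the window index in $\bR^J$) is the right mechanism to extract a positive lower frame bound from \eqref{e4}, and the vertex-dependent normalization you flag in the duality claim is the same issue already present verbatim in the single-window case \eqref{recformula1}.
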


\section{Numerical simulations}

This section describes the numerical approach used to compute the
windowed graph Fourier transform (WGFT) and its multi-window extension
(MWGFT), and to validate the reconstruction formulas established in
Theorems~\ref{RecTeo1} and~\ref{Recteo2}. When $J=1$, the implementation reduces to the standard WGFT described in Section \ref{sec:WFT_rec_formula}, while for $J>1$ it yields the multi-window extension (MWGFT) analyzed in Section 4. The numerical experiments are designed to validate the theoretical reconstruction results, to illustrate vertex–frequency localization, and to highlight the advantages of multi-window representations over single-window constructions. All experiments are carried out
using the exact spectral decomposition of the graph Laplacian.

Recall that $G=(V,E)$ is a finite, connected, undirected weighted graph with
$|V|=N$. The normalized graph Laplacian $\mathcal{L}$ associated with $G$ is
constructed from the adjacency matrix, and its eigendecomposition
\[
\mathcal{L} = U \Lambda U^{*}
\]
is computed, where $U=[\chi_0,\ldots,\chi_{N-1}]$ denotes the orthonormal
matrix of Laplacian eigenvectors and
$\Lambda=\mathrm{diag}(\lambda_0,\ldots,\lambda_{N-1})$ the corresponding
eigenvalues in nondecreasing order. The eigenvectors provide the graph
Fourier basis used throughout the numerical implementation.

\subsection{Spectral window design and test signal generation}\label{sec:numercial_sim}
Throughout this section, $\widehat g_j$ and $\widehat\gamma_j$ denote spectral
windows (graph Fourier multipliers), while $g_j$ and $\gamma_j$ denote the
corresponding vertex-domain windows obtained via inverse graph Fourier transform.

Analysis and synthesis windows are specified in the spectral domain.
Given a window function $  \widehat g:[0,\lambda_{\max}]\rightarrow\mathbb{C}$,
the associated vertex--domain window is obtained as
\[
g = \sum_{\ell=0}^{N-1} \widehat g(\lambda_\ell)\chi_\ell.
\]
In the multi--window setting, several analysis windows
$\{\widehat g_j\}_{j=1}^J$ and synthesis windows
$\{\widehat \gamma_j\}_{j=1}^J$ are selected. These windows define the
graph filtering operators
\[
M_{g_j}=U\,\mathrm{diag}(\widehat g_j)\,U^{*},
\qquad
M_{\gamma_j}=U\,\mathrm{diag}(\widehat \gamma_j)\,U^{*},
\]
which are applied to Kronecker deltas to construct generalized translations on
the graph.

The MWGFT analysis is performed using a collection of spectral windows derived
from a shifted radial basis function (RBF) kernel. Specifically, we define a
prototype spectral window
\[
\widehat g(\lambda)
=
\exp\!\left(-\left(\frac{\lambda}{\ell_{\mathrm{fac}}\lambda_{\max}}\right)^4\right),
\]
where $\ell_{\mathrm{fac}}>0$ controls the localization of the corresponding
filters in the vertex domain. Given this reference spectral kernel, we construct
a family of $K$ analysis windows by spectral shifting,
\[
\widehat g_k(\lambda)=\widehat g(\lambda-\tau_k), \qquad k=1,\dots,K,
\]
where the shifts $\{\tau_k\}$ are chosen to ensure adequate coverage of the
normalized Laplacian spectrum $[0,2]$.

The spectral energy response associated with the family of windows is given by
\[
m(\lambda)=\sum_{k=1}^{K} |\widehat g_k(\lambda)|^2.
\]
Since the RBF kernel $\widehat g(\lambda)$ is strictly positive for all
$\lambda\in\mathbb{R}$, each shifted window $\widehat g_k(\lambda)$ is also
strictly positive. Consequently, $m(\lambda)>0$ for all
$\lambda\in[0,\lambda_{\max}]$, ensuring that the associated diagonal spectral
operator is invertible.

MWGFT analysis is performed by applying the spectral multipliers
$\widehat g_k(\lambda)$ in the graph Fourier domain, i.e., by filtering via the
operators $M_{g_k}$. For reconstruction, the
synthesis windows are obtained by normalizing the analysis windows with respect
to the spectral energy response, namely
\begin{equation}\label{eqn:gamma_synthesis}
\widehat\gamma_k(\lambda)=\frac{\widehat g_k(\lambda)}{m(\lambda)}.
\end{equation}
This normalization ensures that
$\sum_{k=1}^{K}\widehat\gamma_k(\lambda)\widehat g_k(\lambda)=1$ for all
$\lambda$, which corresponds to the exact inversion of the associated diagonal
operator in the graph Fourier basis. In all numerical experiments,
$m(\lambda)$ was observed to be bounded away from zero, yielding a stable and
well-conditioned reconstruction.

To assess the performance of the WGFT and MWGFT, test signals are generated
either directly in the vertex domain or by prescribing a spectral profile. In
particular, smooth signals are obtained by defining $\widehat f(\lambda_\ell)$
and computing
\[
f = U\,\widehat f,
\]
which ensures consistency with the graph Fourier framework.

\subsection{WGFT and MWGFT computation.}
For each window $g_j$, WGFT atoms are constructed according to
Definition~\eqref{e2} by combining generalized translation and modulation
operators. The WGFT coefficients
\[
S_f^{(j)}(n,k)=\langle f,g^{j}_{n,k}\rangle
\]
are computed for all vertices $i$ and frequency indices $k$, yielding a
vertex--frequency representation of the signal. In the multi--window case,
this procedure is repeated for all windows, and the resulting coefficients
are combined in the synthesis stage.

Signal reconstruction is performed using the multi--window synthesis formula
given in Theorem~\ref{Recteo2}. The reconstruction requires the computation
of the quantities $\langle T_n\gamma_j,T_n g_j\rangle$, which are evaluated
explicitly for all vertices and windows. Their nonvanishing is verified
numerically, in agreement with the sufficient conditions derived in
Sections~\ref{sec:WFT_rec_formula} and~\ref{sec:MWGFT}.

For clarity, Algorithm \ref{alg:MWGFT} summarizes the MWGFT analysis and synthesis procedure used in all experiments. In the implementation, translated windows are handled implicitly via the graph
filtering operators $M_{g_j}$ and $M_{\gamma_j}$, without explicitly forming all
translations $T_n g_j$.

\begin{algorithm}[H]
\caption{Multi--Windowed Graph Fourier Transform (MWGFT): Analysis and Synthesis}
\label{alg:MWGFT}
\begin{algorithmic}[1]
\Require Graph Laplacian $\mathcal{L}$, signal $f\in\mathbb{C}^N$, spectral analysis windows
$\{\widehat g_j\}_{j=1}^J$, spectral synthesis windows $\{\widehat\gamma_j\}_{j=1}^J$
\Ensure Reconstructed signal $f_{\mathrm{rec}}\in\mathbb{C}^N$

\State Compute eigendecomposition $\mathcal{L}=U\Lambda U^{*}$
\State Initialize numerator $p\gets 0\in\mathbb{C}^N$ and denominator $d\gets 0\in\mathbb{C}^N$

\For{$j=1$ to $J$}
    \State Construct graph filters
    \[
      M_{g_j}=U\,\mathrm{diag}(\widehat g_j)\,U^{*},
      \qquad
      M_{\gamma_j}=U\,\mathrm{diag}(\widehat\gamma_j)\,U^{*}
    \]
    \State Compute WGFT coefficients $S_f^{(j)}(i,k)=\langle f,g^{(j)}_{i,k}\rangle$ for all $(i,k)$
    \State Compute denominator terms $d_j(n)=\langle T_n\gamma_j,T_n g_j\rangle$ for all $n$
    \State Accumulate synthesis contributions $p \leftarrow p + \sum_{i,k} \langle f, g^{(j)}_{i,k}\rangle\,\gamma^{(j)}_{i,k}$
    \State Accumulate denominator $d \gets d + d_j$
\EndFor

\State Reconstruct
\[
  f_{\mathrm{rec}}(n)=\frac{p(n)}{N\,d(n)},\qquad n=1,\dots,N
\]
\Return $f_{\mathrm{rec}}$
\end{algorithmic}
\end{algorithm}

\subsection{Numerical validation and visualization.}
Reconstruction accuracy is quantified by the relative $\ell^2$-error between
the original and reconstructed signals. WGFT and MWGFT spectrograms are
displayed as two-dimensional vertex-frequency maps, and vertex--domain
signals are visualized on the graph using node coordinates when available.
These results illustrate the localization and stability properties of the
proposed transforms.

\subsection*{Path graph, impulse signal, RBF kernel}
We begin our numerical investigation with a simple and interpretable test case,
namely a path graph and an impulse signal.
This setting allows us to isolate the basic properties of the proposed MWGFT
framework, including exact reconstruction and vertex–frequency localization,
before moving to more complex signal and graph structures.

Figure~\ref{fig:path_signal_graph} shows the reconstruction performance of the proposed MWGFT framework on a path graph with 50 nodes, for an impulse signal supported at node 25. The original and reconstructed signals are represented on the graph vertices and coincide up to numerical precision, illustrating the exact recovery up to numerical precision guaranteed by Theorem~\ref{Recteo2}.

\begin{figure}[H]
\centering
\includegraphics[width=0.45\linewidth]{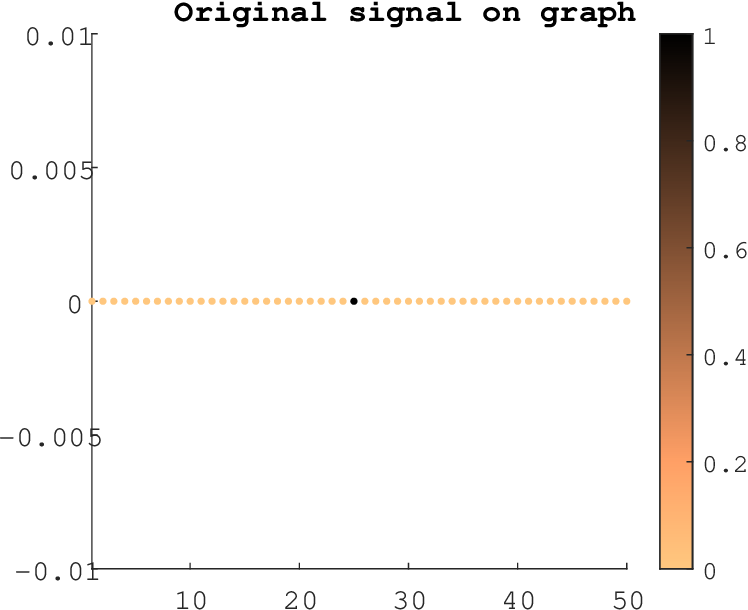}
\includegraphics[width=0.45\linewidth]{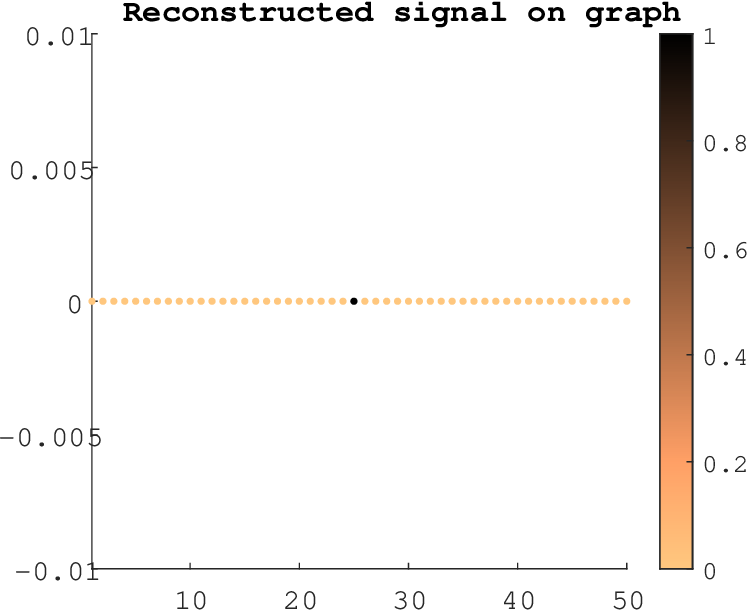}
\caption{Original and reconstructed signals on the graph. The left panel shows the original signal $f$ defined on the graph vertices, while the right panel displays the reconstructed signal $f_{rec}$ obtained via the multi-windowed graph Fourier transform synthesis formula. The close visual agreement confirms exact signal recovery up to numerical precision in the vertex domain.}
\label{fig:path_signal_graph}
\end{figure}

Figure~\ref{fig:path_graph_windows} illustrates the spectral windows employed in the MWGFT framework for both analysis and synthesis. The analysis windows $\widehat{g_j}(\lambda)$ are obtained by shifting a prototype RBF kernel across the spectrum of the normalized graph Laplacian, providing a collection of overlapping filters that explore different frequency regions. Their shapes reflect the frequency localization properties induced by the choice of the parameter $\ell_{\mathrm{fac}}$, here $\ell_{\mathrm{fac}}= 0.7$.

The corresponding synthesis windows $\widehat{\gamma_j}(\lambda)$ are constructed through normalization with respect to the spectral energy response, ensuring stable reconstruction. As shown in the figure, the synthesis windows preserve the overall spectral coverage while compensating for the overlap among analysis windows. This normalization plays a crucial role in guaranteeing well-conditioned reconstruction and allows the MWGFT to balance frequency selectivity and vertex localization.

\begin{figure}[H]
\centering
\includegraphics[width=0.45\linewidth]{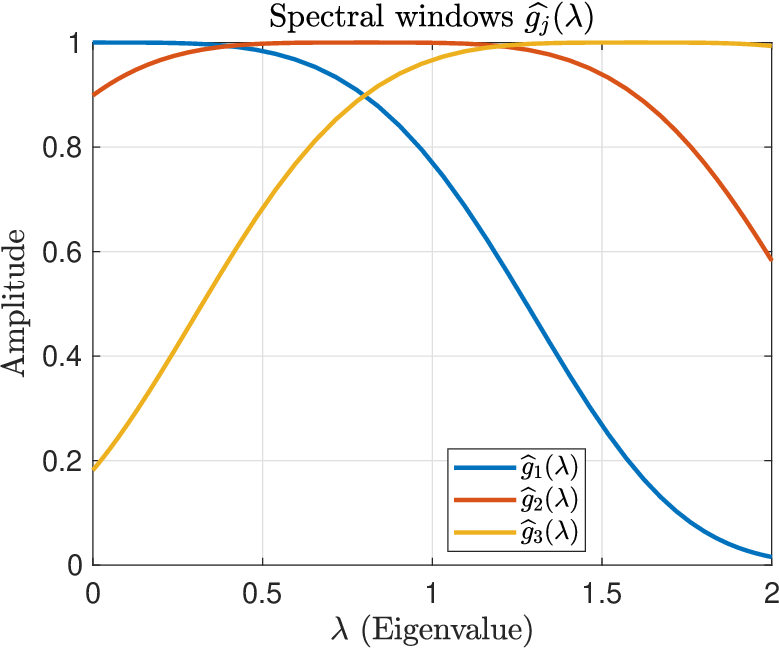}
\includegraphics[width=0.45\linewidth]{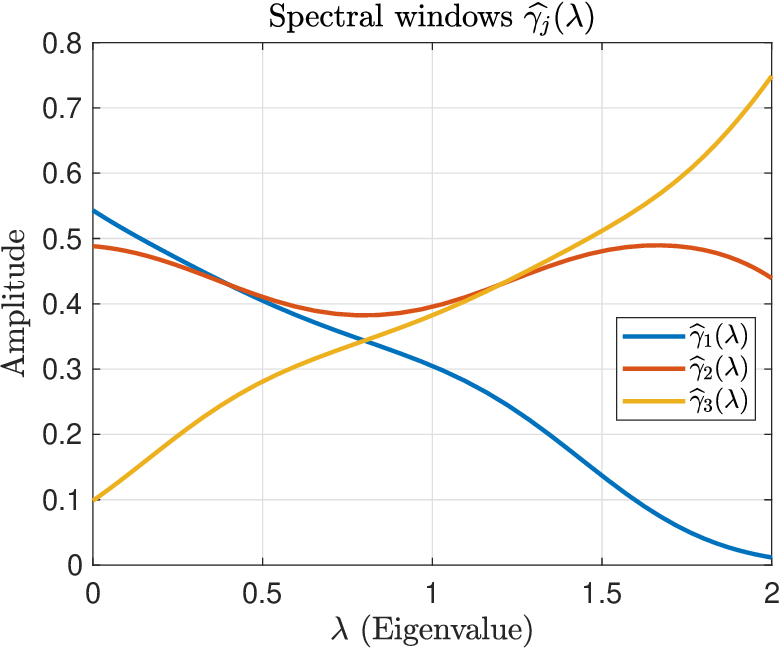}
\caption{Spectral analysis windows $\widehat{g_j}(\lambda)$ (left) and corresponding synthesis windows $\widehat{\gamma_j}(\lambda)$ (right) in the graph Fourier domain.}
\label{fig:path_graph_windows}
\end{figure}

Figure \ref{fig:path_graph_spectogram} reports the MWGFT spectrograms obtained for an impulse signal on a path graph. The averaged spectrogram provides a global representation of the energy distribution across graph vertices and graph Fourier modes, while the individual spectrograms corresponding to each window highlight how the multi-window construction decomposes the broadband impulse into components with different vertex–frequency localizations. In particular, the impulse response remains spatially concentrated around the excited vertex, while the distribution across graph Fourier modes reflects the spectral selectivity induced by the window design. We emphasize that the horizontal axis of the spectrogram indexes Laplacian
eigenmodes ordered by increasing eigenvalue, rather than the eigenvalues
themselves.

\begin{figure}[H]
\centering
\includegraphics[width=0.5\linewidth]{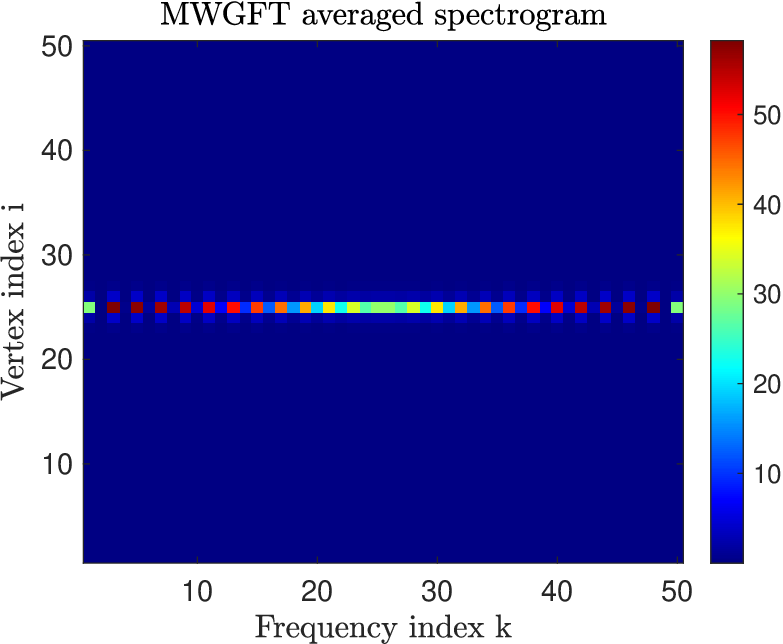} \\
\vspace{0.3cm}
\includegraphics[width=0.3\linewidth]{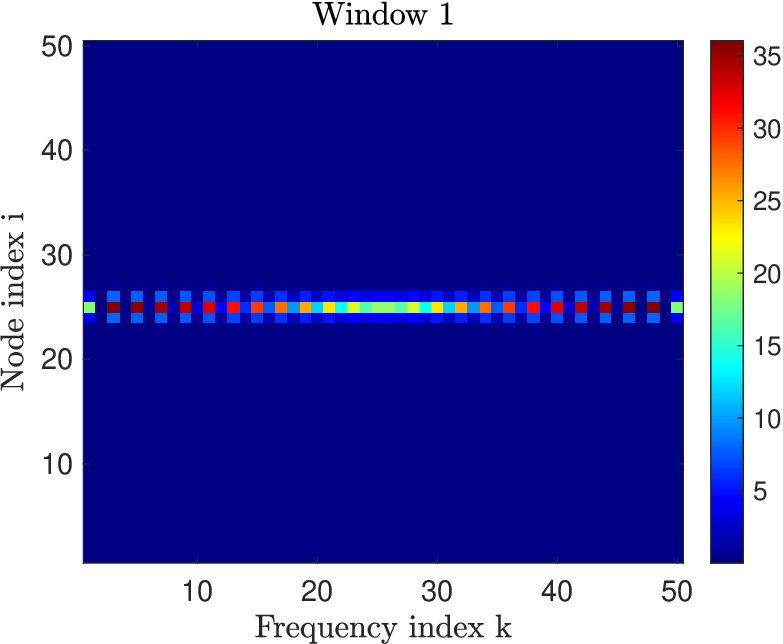}
\includegraphics[width=0.3\linewidth]{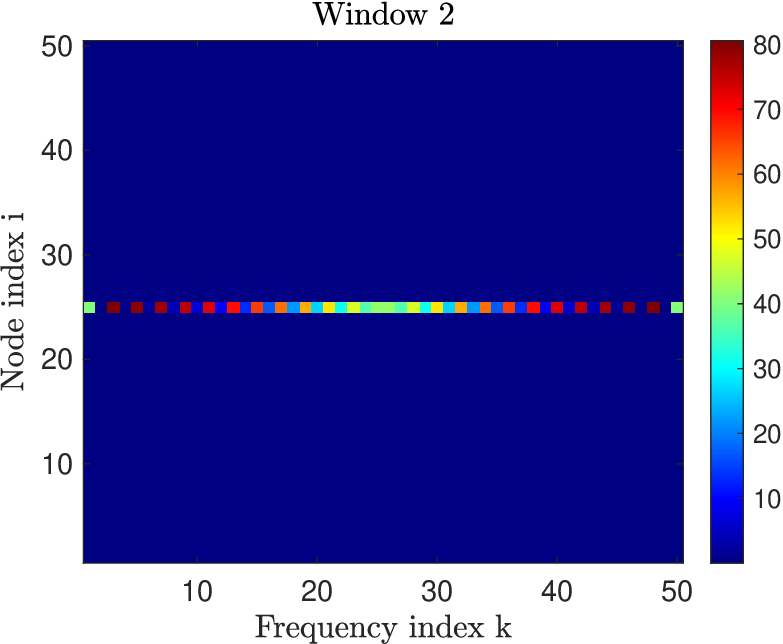}
\includegraphics[width=0.3\linewidth]{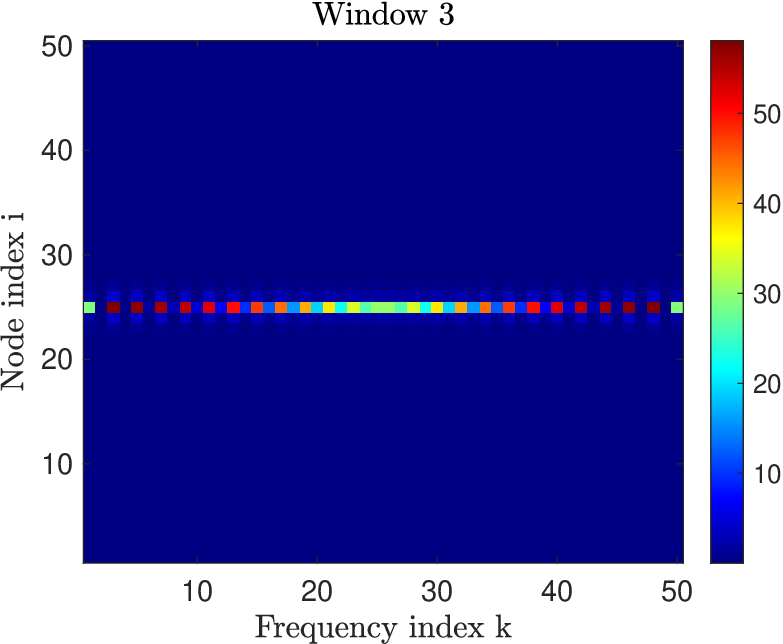}
\caption{MWGFT spectrograms of an impulse on a path graph. From left to right: averaged spectrogram over the three windows, and spectrograms corresponding to each individual window. Here, the vertical axis corresponds to graph vertices, while the horizontal axis indexes graph Fourier modes (i.e., Laplacian eigenmodes).}
\label{fig:path_graph_spectogram}
\end{figure}

\subsection*{Minnesota graph, heat signal, RBF kernel}

Here we consider a real-world irregular graph, namely the Minnesota road
network, and a smooth heat-type signal to assess the performance of the proposed
MWGFT framework beyond simple graph topologies.
The use of RBF-type spectral windows allows for a controlled decomposition of the
signal across different graph-frequency regions while preserving spatial
localization. Here, differently from the previous example, the analysis and synthesis windows are the same. This choice highlights that stability is preserved even when canonical dual
windows are not used.

As illustrated in Figure~\ref{fig:minnesota_signal_graph}, the original and reconstructed signals
are visually indistinguishable on the graph and in the vertex domain, with the
reconstruction error remaining at machine precision.
Figure~\ref{fig:minnesota_windows} further demonstrates that the selected family
of spectral windows yields reconstruction denominators that are uniformly
bounded away from zero across all vertices, thereby satisfying the
non-degeneracy condition required by Theorem~\ref{Recteo2}.
Together, these results confirm that the proposed MWGFT framework provides stable
and well-conditioned reconstruction on irregular graphs when using multiple
spectral windows, even in the presence of nonuniform graph connectivity.

Figure \ref{fig:minnesota_signal_graph} illustrates the reconstruction performance of the proposed MWGFT framework on Minnesota graph. The original signal (heat kernel) and its reconstruction are displayed directly on the graph using the vertex embedding (second row). The reconstructed signal overlaps with the original one at all vertices, while the pointwise error remains at machine precision. This result numerically validates the reconstruction formula in Theorem~\ref{Recteo2}.

\begin{figure}[H]
\centering
\includegraphics[width=0.45\linewidth]{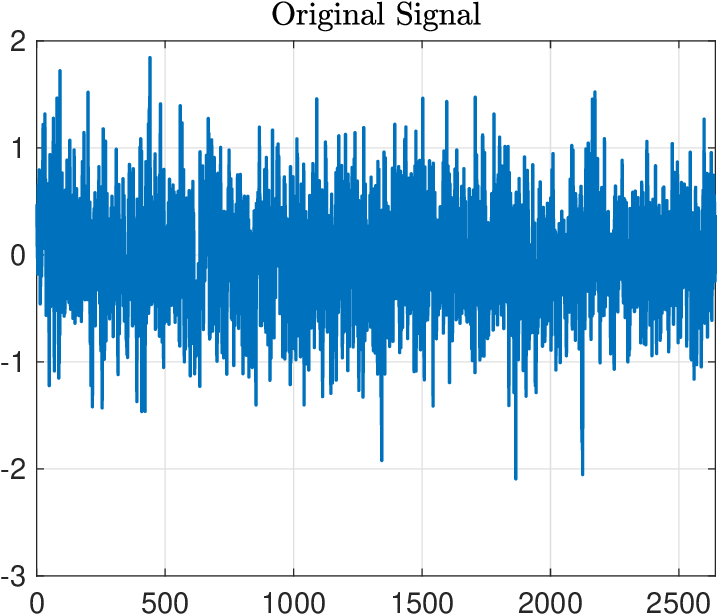}
\includegraphics[width=0.45\linewidth]{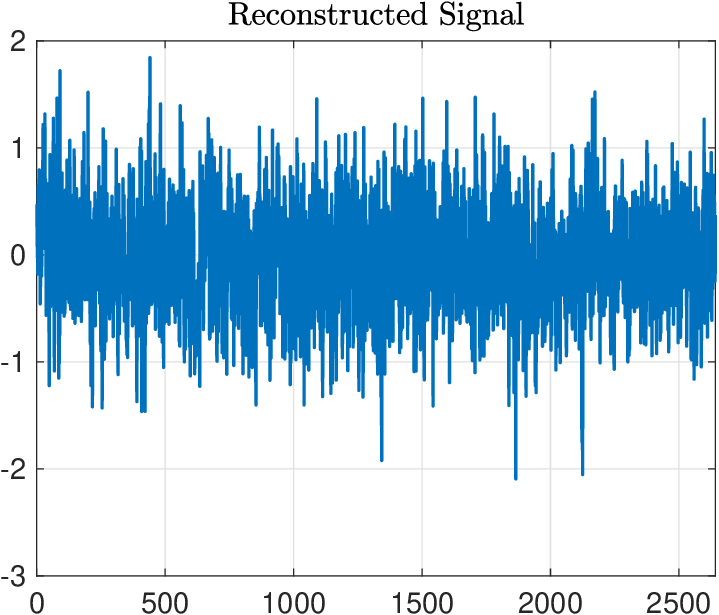}

\includegraphics[width=0.45\linewidth]{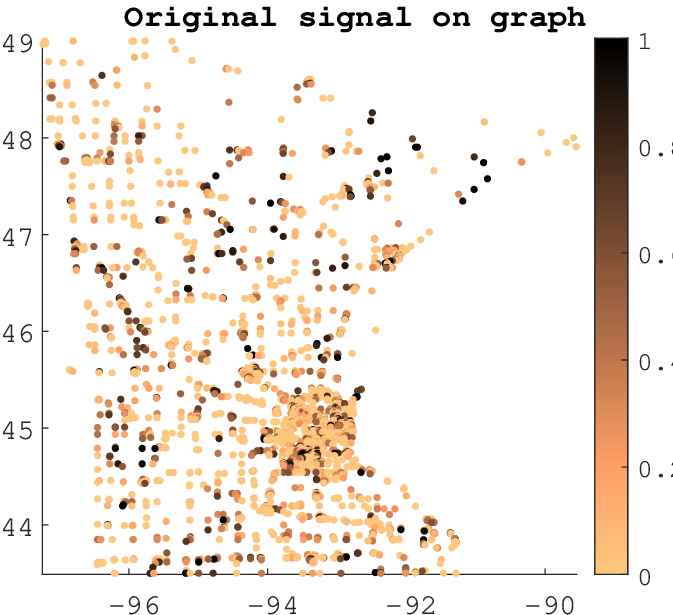}
\hspace{0.3cm}
\includegraphics[width=0.45\linewidth]{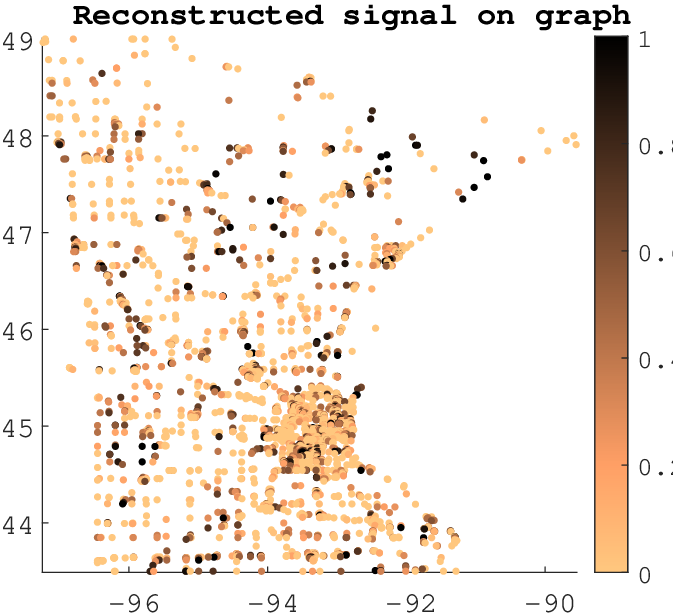}

\vspace{0.3cm}
\includegraphics[width=0.45\linewidth]{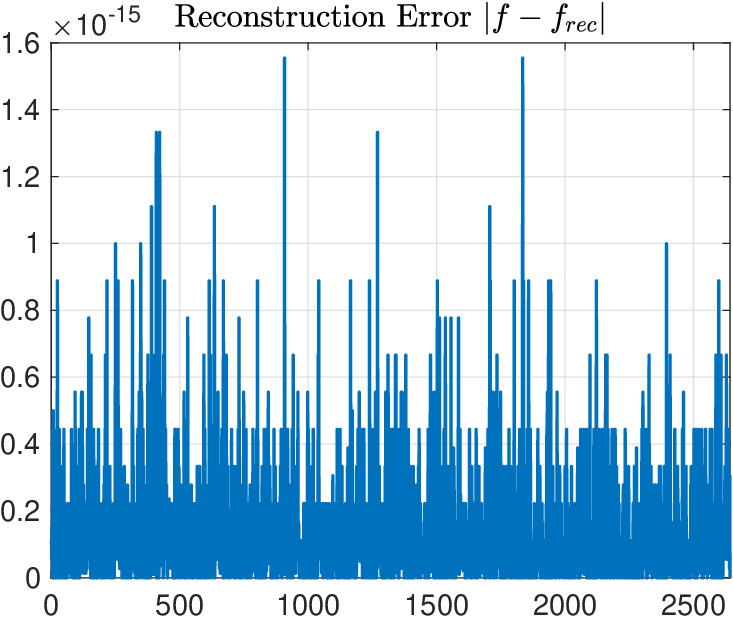}
\caption{Original and reconstructed signals on the graph. The left panel shows the original signal $f$ defined on the graph vertices, while the right panel displays the reconstructed signal $f_{rec}$ obtained via the multi-windowed graph Fourier transform synthesis formula. The close visual agreement confirms exact signal recovery in the vertex domain. Third row shows pointwise reconstruction error $|f-f_{rec}|$. The error remains at numerical precision, confirming the stability of the reconstruction.}
\label{fig:minnesota_signal_graph}
\end{figure}

%as well as the non-degeneracy conditions on the translated windows. This behavior is consistent with the fact that the denominator
%$\sum_{j=1}^{J} \langle T_n \gamma_j,\, T_n g_j \rangle$
%remains uniformly bounded away from zero for all vertices $n$.

Figure \ref{fig:minnesota_windows} illustrates the relationship between the choice of spectral windows and
the stability of the reconstruction formula.
The top panels report the spectral profiles of three representative RBF-type
analysis windows, corresponding to different frequency localizations.
The bottom panels show the associated reconstruction denominators
$\left|\sum_{j} \langle T_n \gamma_j,\, T_n g_j\rangle\right|$ evaluated at each
vertex.

\begin{figure}[H]
\centering
\includegraphics[width=0.3\linewidth]{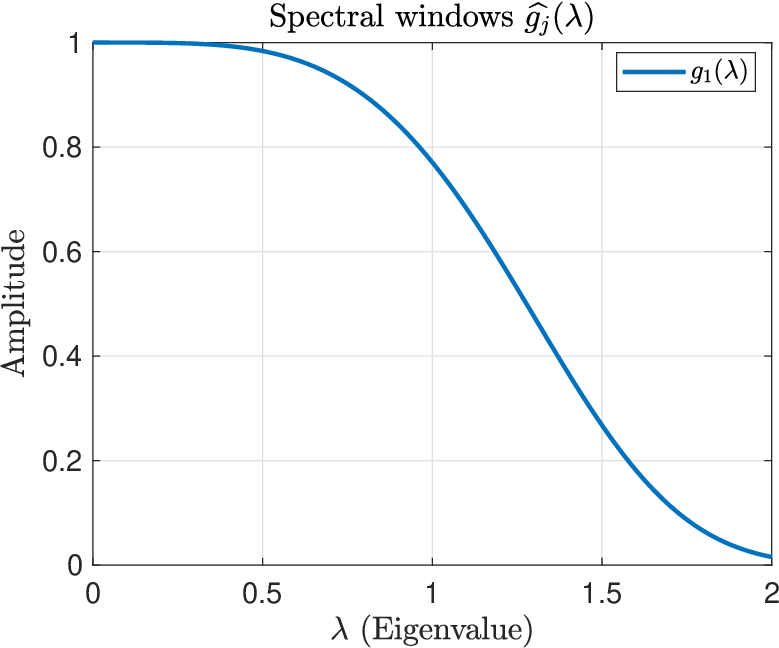}
\includegraphics[width=0.3\linewidth]{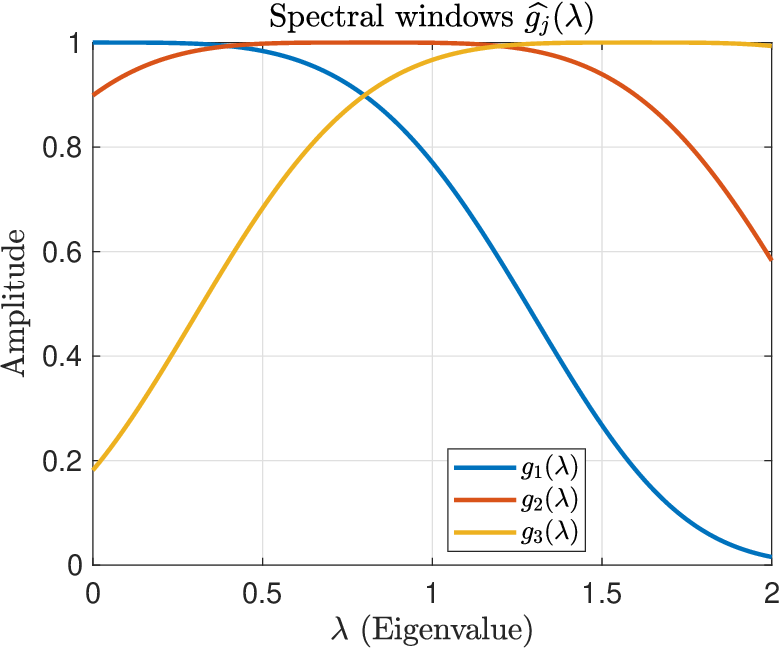}
\includegraphics[width=0.3\linewidth]{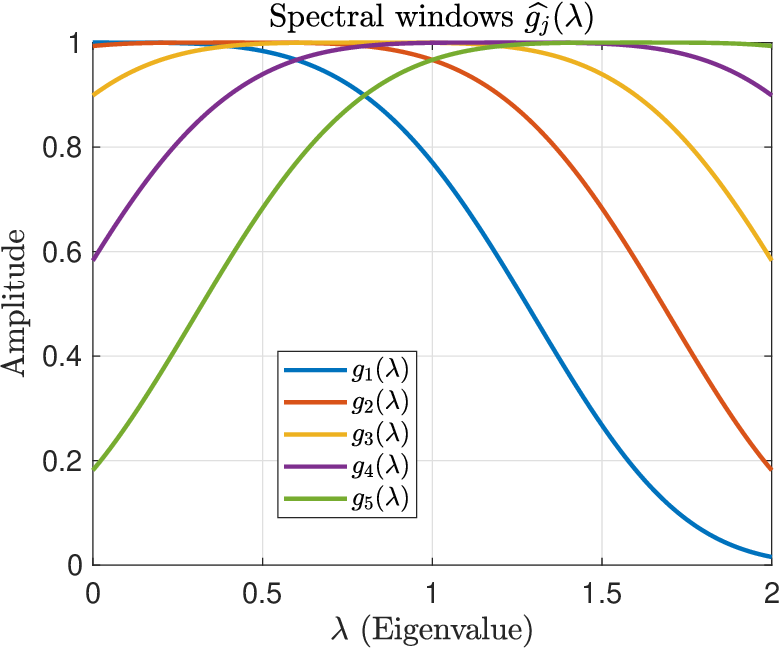}

\vspace{0.2cm}

\includegraphics[width=0.3\linewidth]{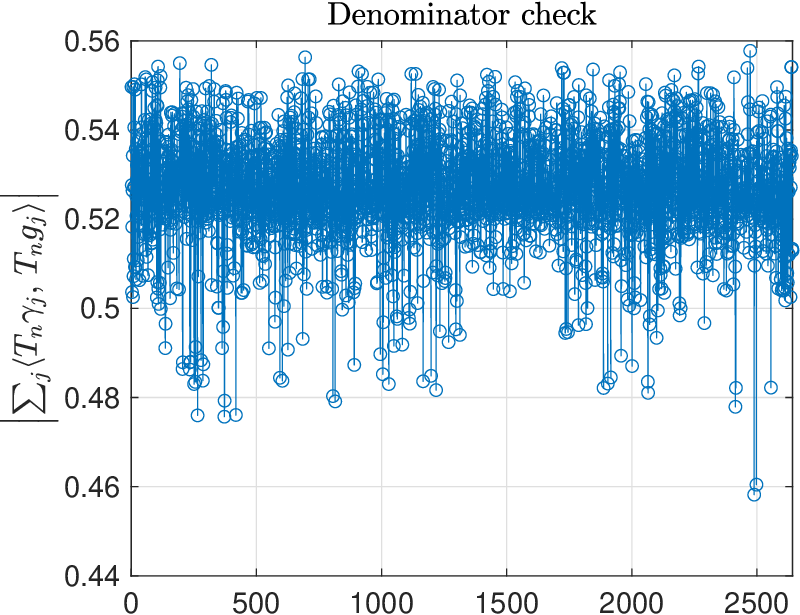}
\includegraphics[width=0.3\linewidth]{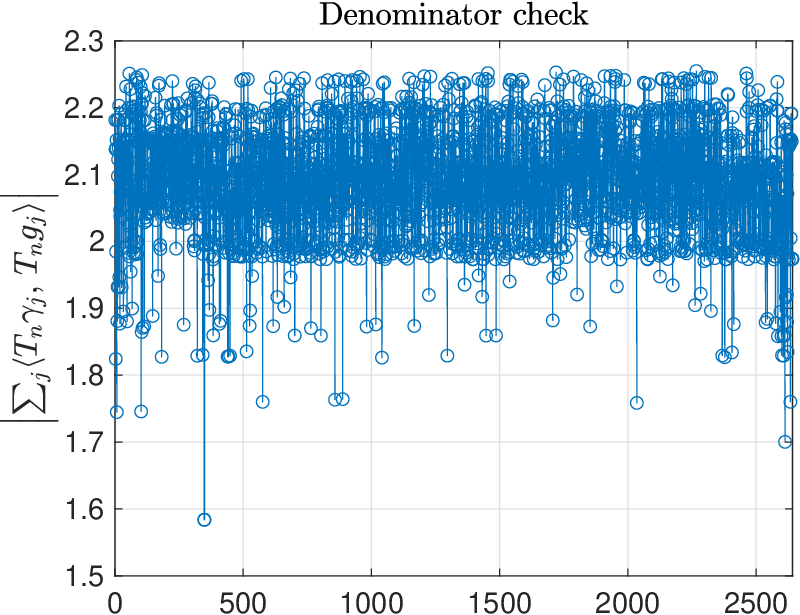}
\includegraphics[width=0.3\linewidth]{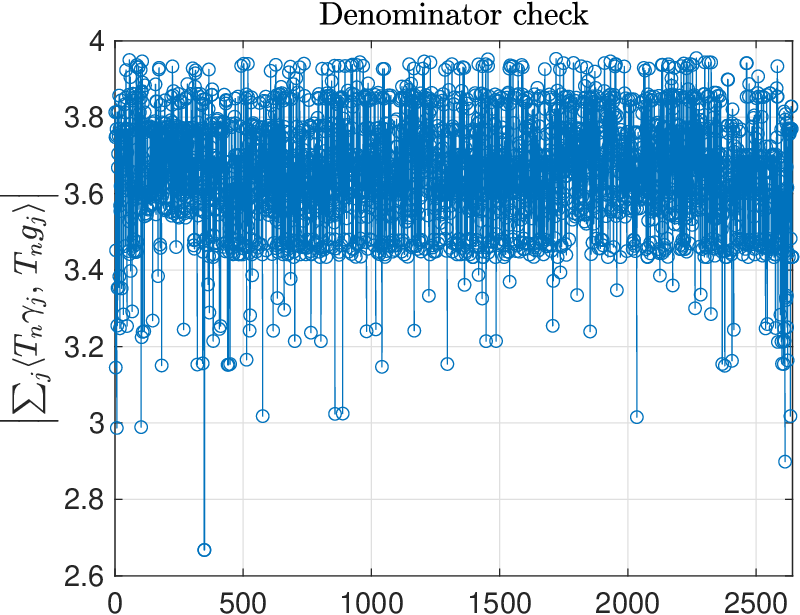}
\caption{Spectral windows and corresponding reconstruction denominators.
Top row: spectral profiles of the analysis windows $\widehat g_j(\lambda)$
(RBF kernel type) for window indices $j=1,3,5$.
Bottom row: corresponding values of the reconstruction denominator
$\left|\sum_{j} \langle T_n \gamma_j,\, T_n g_j\rangle\right|$ as a function of the
vertex index $n$. The denominator remains strictly positive for all vertices, ensuring stable
reconstruction.}
\label{fig:minnesota_windows}
\end{figure}
For all considered window selections, the denominator remains uniformly bounded
away from zero across the graph, thereby satisfying the non-degeneracy condition
required by Theorem~\ref{Recteo2}.
These results confirm that the chosen window designs lead to stable and
well-conditioned reconstructions even when reconstruction is performed using synthesis windows that differ from the canonical duals and relies on the corresponding normalization.

\subsection*{Path graph, chirp vertex signal, RBF kernel}
We finally consider a path graph with $50$ nodes and a vertex-localized chirp signal,
defined as a Gaussian envelope modulated by a linearly varying phase, Figure~\ref{fig:path_chirp_signal_vertex_graph}.

\begin{figure}[H]
\centering
\includegraphics[width=0.45\linewidth]{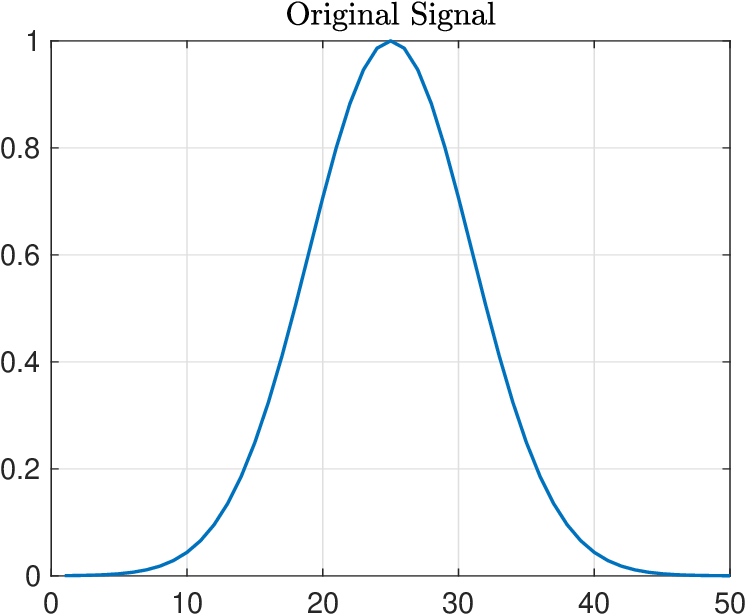}
\includegraphics[width=0.45\linewidth]{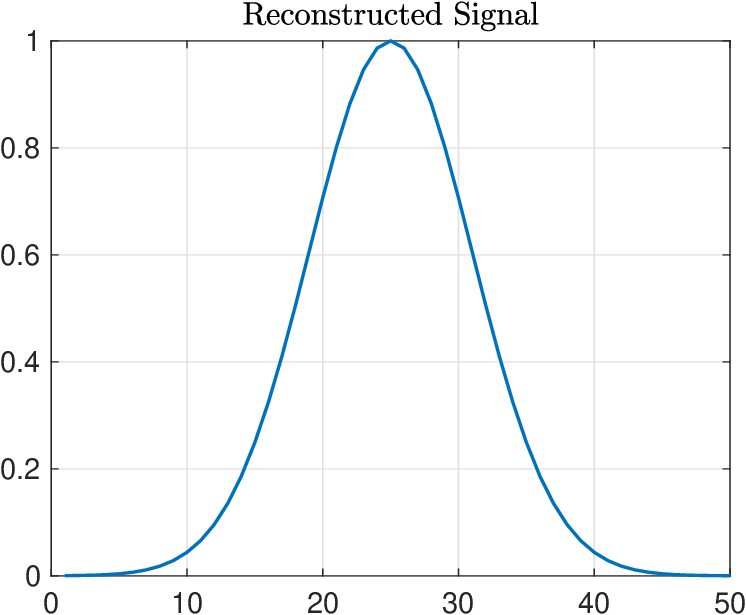}

\vspace{0.3cm}
\includegraphics[width=0.45\linewidth]{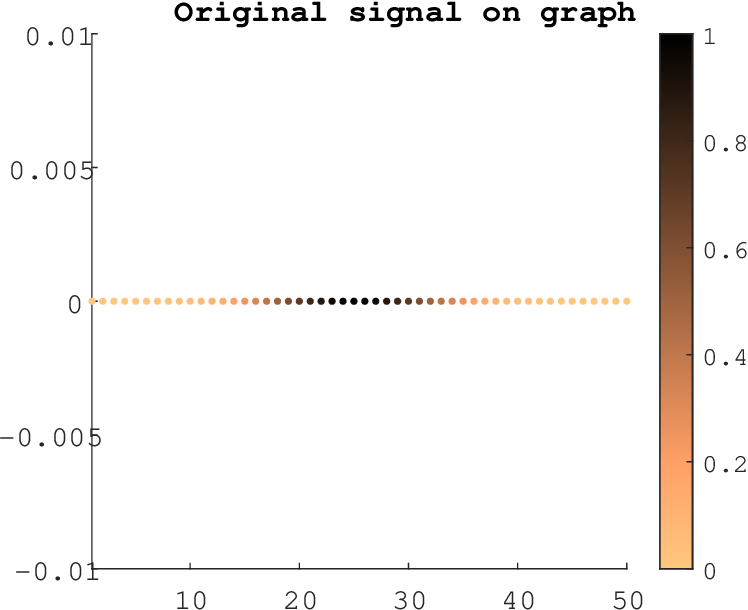}
\includegraphics[width=0.45\linewidth]{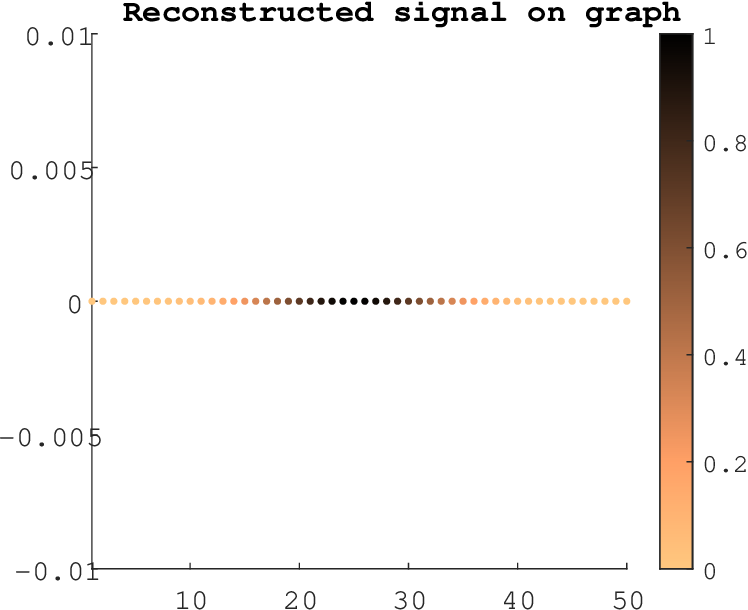}
\caption{Top: vertex-domain representation of the reconstruction process. Bottom: signal defined on the graph vertices. From left to right: original signal $f$, reconstructed signal $f_{rec}$.}
\label{fig:path_chirp_signal_vertex_graph}
\end{figure}

A vertex-localized chirp signal is used to demonstrate joint vertex–frequency
localization and is defined by
\[
f(n)
=
\exp\!\left(-\frac{(n-i_0)^2}{2\sigma^2}\right)
\exp\!\big(i\,\alpha (n-i_0)\big),
\qquad n=1,\dots,N,
\]
where the Gaussian envelope ensures spatial localization around the vertex $i_0$,
while the complex exponential introduces oscillatory behavior across the graph.
The parameter $\sigma = 6$ controls the spatial extent of the signal and $\alpha = 0.3$
determines its frequency content. Although the phase varies linearly with the vertex index, this signal already
produces a structured ridge in the vertex–frequency plane, sufficient to
illustrate joint localization effects. This construction yields a compact and
interpretable test signal exhibiting both vertex localization and nontrivial
graph-frequency structure.

This signal exhibits joint localization in the vertex domain and in graph
frequencies, making it well suited to illustrate the capabilities of the proposed
MWGFT framework.
The analysis is performed using $6$ RBF-type spectral windows, $\ell_{\mathrm{fac}}= 0.5$, uniformly shifted
across the spectrum of the normalized graph Laplacian, Figure~\ref{fig:path_chirp_graph_windows} (left), and for the reconstruction six synthesis windows defined as in \eqref{eqn:gamma_synthesis}, Figure~\ref{fig:path_chirp_graph_windows} (right).

\begin{figure}[H]
\centering
\includegraphics[width=0.45\linewidth]{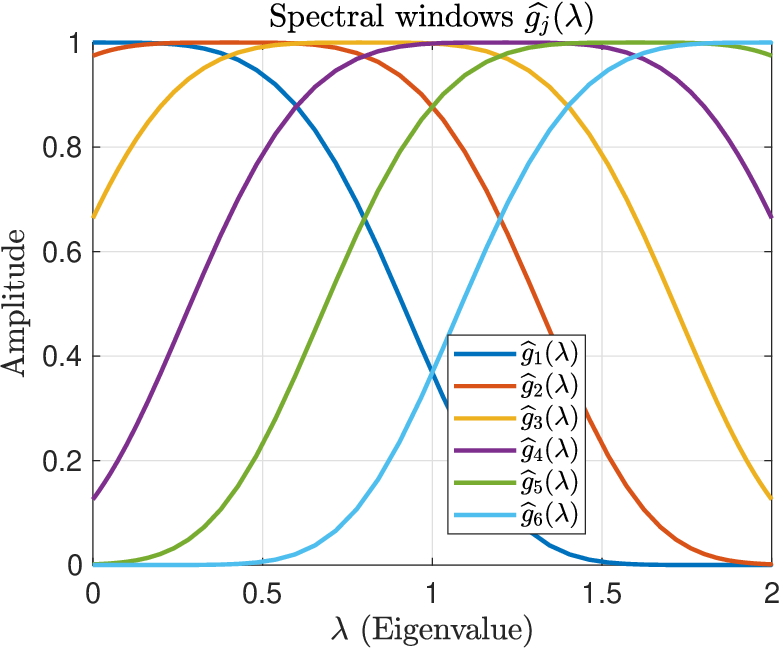}
\includegraphics[width=0.45\linewidth]{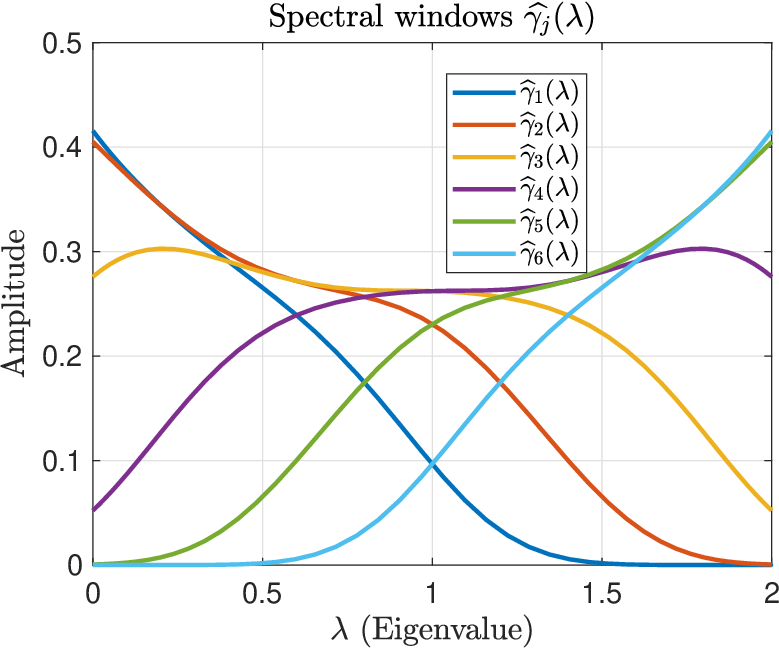}
\caption{Spectral analysis windows ${g_j}(\lambda)$ (left) and corresponding synthesis windows ${\gamma_j}(\lambda)$ (right) in the graph Fourier domain.}
\label{fig:path_chirp_graph_windows}
\end{figure}

As shown in Figure~\ref{fig:path_chirp_graph_spectogram}, MWGFT spectrograms reveal a compact spatial support around the
center vertex together with a structured distribution across graph Fourier modes.
In particular, the averaged spectrogram highlights a ridge-like pattern in the
vertex–frequency plane, which reflects the chirp behavior of the signal and
demonstrates effective joint vertex–frequency localization.

\begin{figure}[H]
\centering
\includegraphics[width=0.5\linewidth]{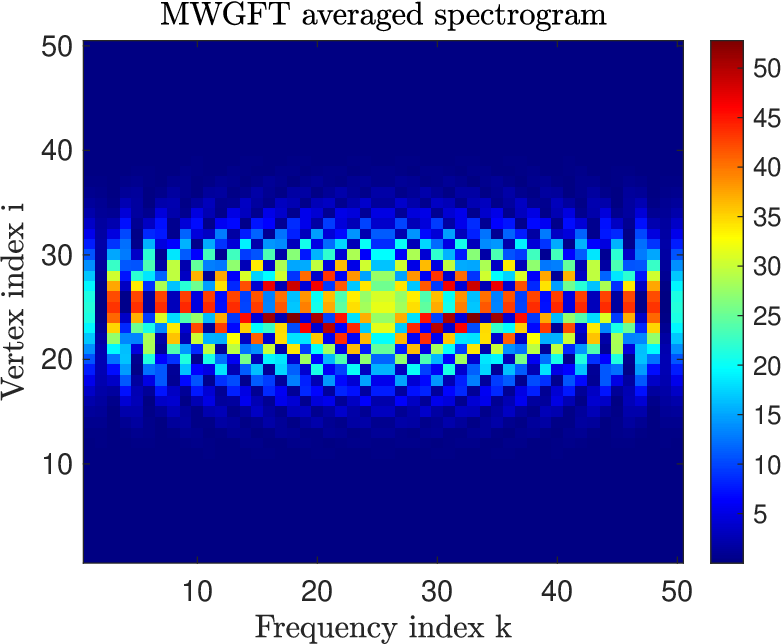} \\
\vspace{0.2cm}

\includegraphics[width=0.3\linewidth]{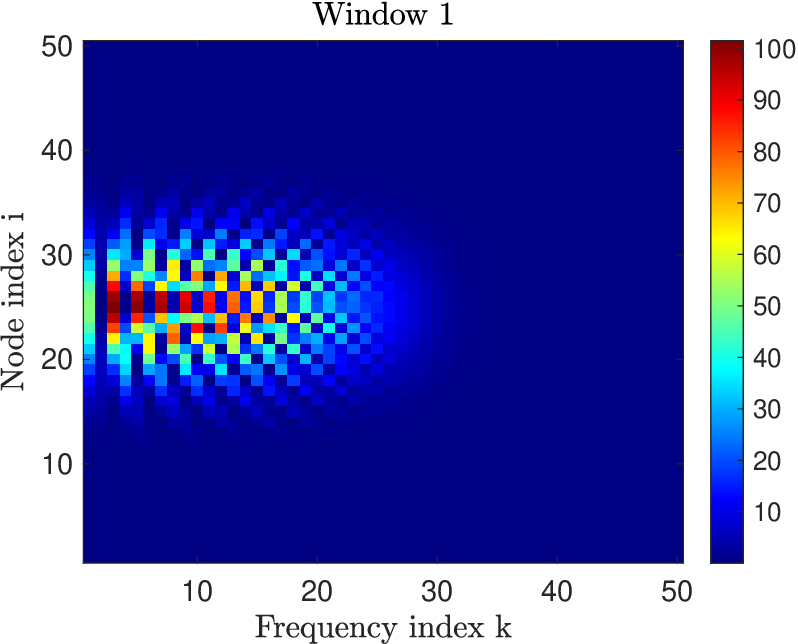}
\includegraphics[width=0.3\linewidth]{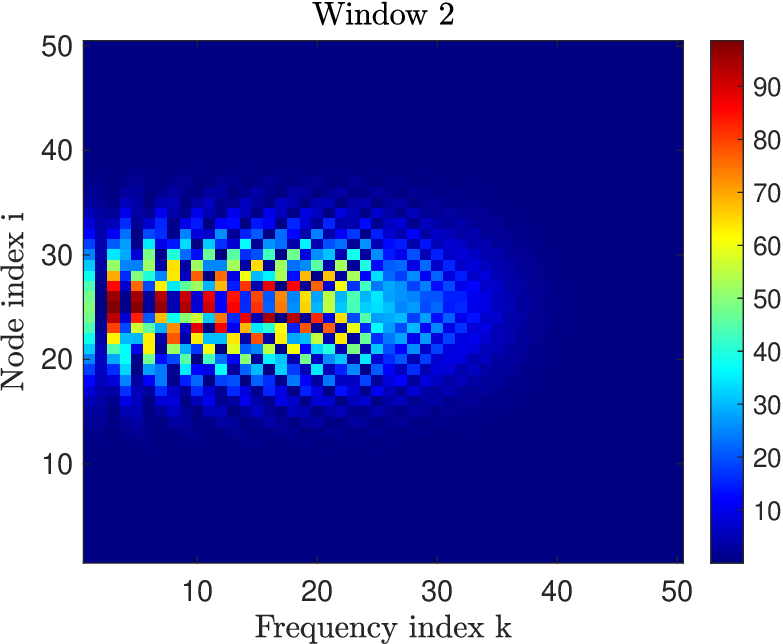}
\includegraphics[width=0.3\linewidth]{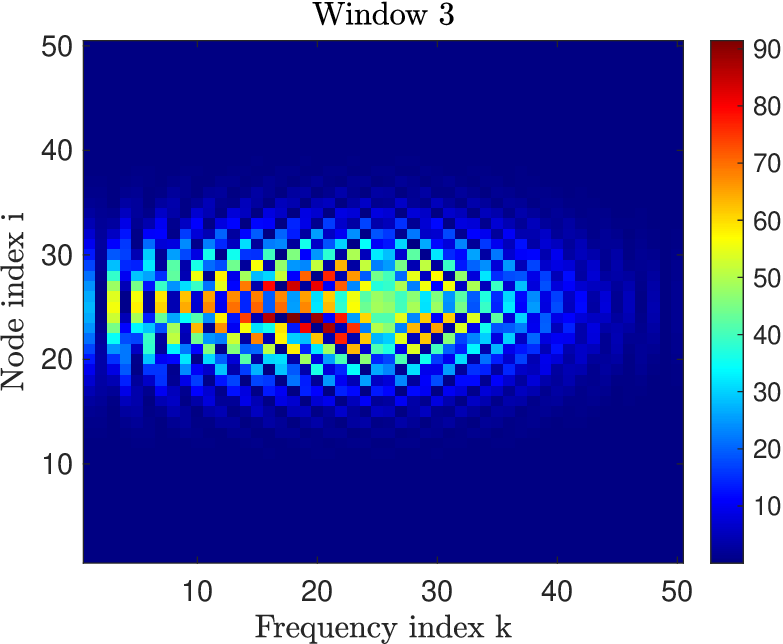}

\includegraphics[width=0.3\linewidth]{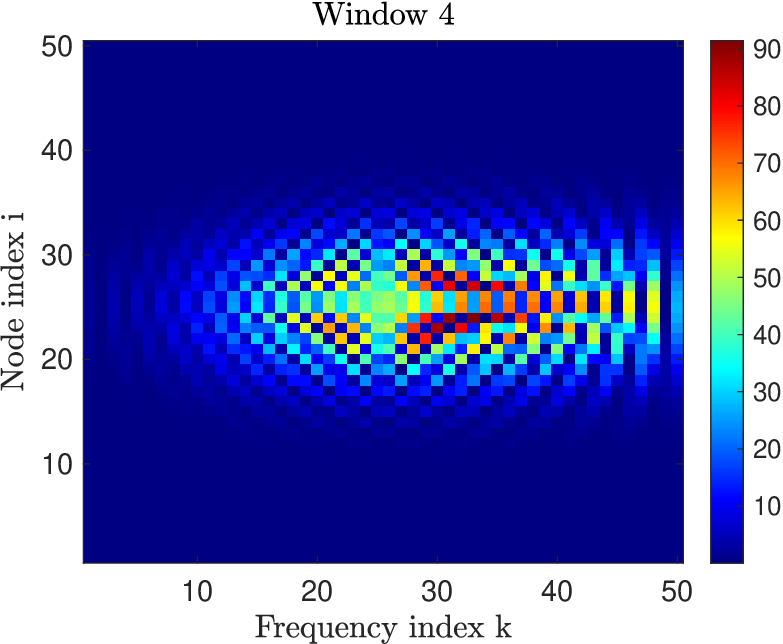}
\includegraphics[width=0.3\linewidth]{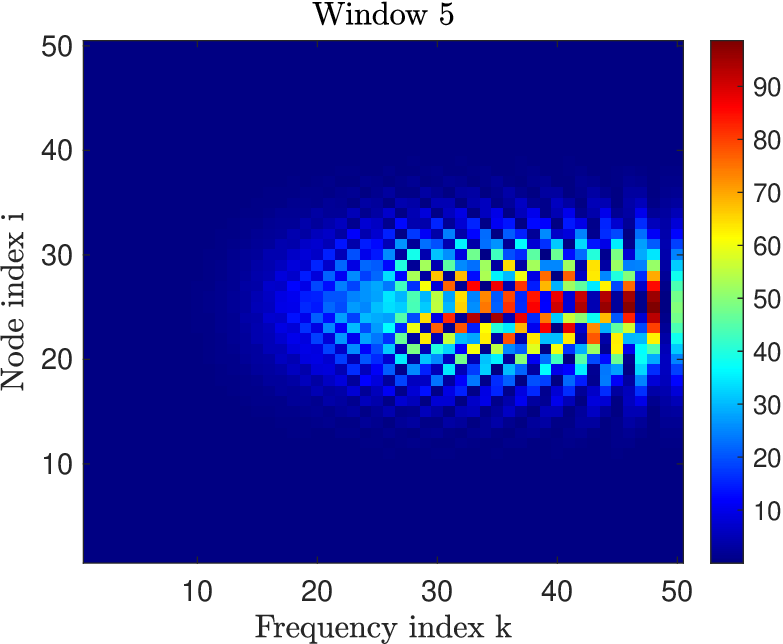}
\includegraphics[width=0.3\linewidth]{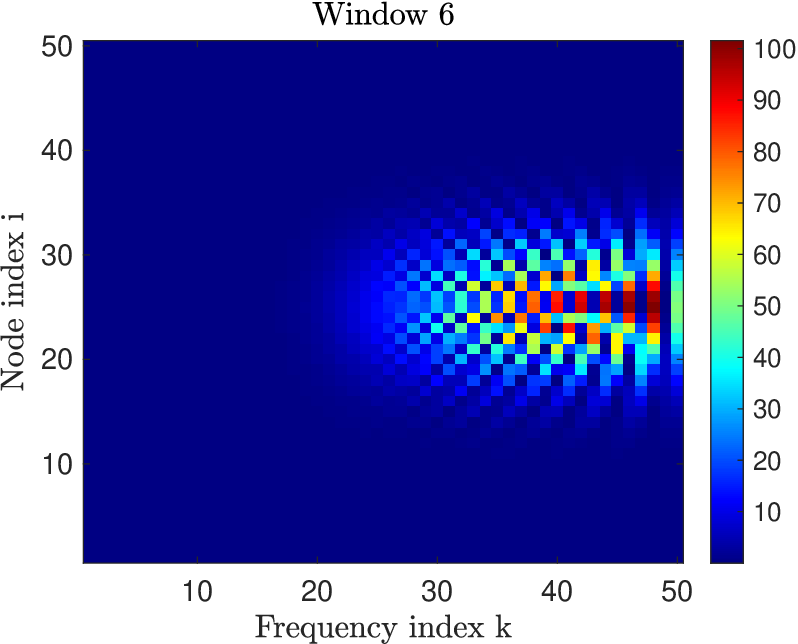}

\caption{MWGFT spectrograms of the chirp-vertex-signal on a path graph. Top: averaged spectrogram over the six windows. Bottom: spectrograms corresponding to each individual window. Here, the vertical axis corresponds to graph vertices, while the horizontal axis indexes graph Fourier modes (i.e., Laplacian eigenmodes).}
\label{fig:path_chirp_graph_spectogram}
\end{figure}

For convenience, Table~\ref{tab:numerical_summary} summarizes the numerical
experiments presented in this section, together with their main objectives.

\begin{table}[H]
\centering
\caption{Summary of numerical experiments presented in Section~5.}
\label{tab:numerical_summary}
\begin{tabular}{llllp{4.3cm}}
\hline
\textbf{Graph} & \textbf{Signal} & \textbf{Windows} & \textbf{Figures} & \textbf{Purpose} \\
\hline
Path ($N=50$)
& Impulse
& RBF (3 windows)
& Figs.~1--3
& Exact reconstruction; basic vertex--frequency behavior \\

Minnesota
& Heat signal
& RBF (5 windows)
& Figs.~4--5
& Stability on irregular graphs; denominator verification \\

Path ($N=50$)
& Vertex chirp
& RBF (6 windows)
& Figs.~6--8
& Joint vertex--frequency localization; MWGFT advantage \\
\hline
\end{tabular}
\end{table}

\section{Conclusions}\label{sec:conclusions}

In this work we introduced a multi-windowed graph Fourier transform (MWGFT) for
the analysis of signals defined on finite graphs. The proposed framework extends
the windowed graph Fourier transform by allowing multiple analysis and synthesis
windows, thereby providing increased flexibility in the joint vertex–frequency
representation of graph signals. Exact reconstruction formulas were derived for
complex-valued signals, together with sufficient conditions ensuring stable
invertibility. These results naturally lead to frame representations associated
with families of generalized translated and modulated windows.

The numerical experiments confirmed the theoretical findings and illustrated
the practical relevance of the MWGFT. In particular, exact reconstruction was
observed up to machine precision in all considered cases, and the non-degeneracy
conditions required by the reconstruction formulas were verified numerically.
Experiments on both regular and irregular graphs highlighted the ability of the
multi-window approach to improve robustness and interpretability compared to
single-window constructions, especially in terms of vertex–frequency
localization.

The present implementation relies on the full eigendecomposition of the graph
Laplacian and is therefore suitable for graphs of moderate size. Extensions to
large-scale graphs based on approximate spectral filtering techniques are
currently under investigation. More generally, the proposed MWGFT provides a
flexible and theoretically grounded tool for graph signal processing, with
potential applications in the analysis of nonstationary or spatially localized
data on networks.

\section*{Acknowledgements}
IMB and SS have been supported by the project TIGRECO funded by the MUR Progetti di Ricerca di Rilevante Interesse Nazionale (PRIN) Bando 2022, Grant 20227TRY8H, (CUP-J53D23003630001 for IMB, CUP-C53D23002400006 for SS). This research has been accomplished within RITA (Research ITalian network on Approximation) and the UMI Group TAA (Approximation Theory and Applications). \par
The last three authors are members of Gruppo Nazionale per l’Analisi Matematica, la Probabilit\`a e le loro Applicazioni (GNAMPA) --- Istituto Nazionale di Alta Matematica (INdAM).

%% `Elsevier LaTeX' style
\bibliographystyle{abbrv}
\bibliography{Bib_WGFT}   % name your BibTeX data base

\begin{thebibliography}{10}

\bibitem{ALIKASIFOGLU2025109944}
T.~Alikasifo\u{g}lu, B.~Kartal, E.~{\"O}zg{\"u}nay, and A.~Koc.
\newblock Joint time-vertex fractional {F}ourier transform.
\newblock {\em Signal Processing}, 233:109944, 2025.

\bibitem{10.1371/journal.pone.0261041}
E.~Amico and I.~M. Bulai.
\newblock How political choices shaped covid connectivity: {T}he {I}talian case
  study.
\newblock {\em PLOS ONE}, 16(12):1--16, 12 2021.

\bibitem{Behjat2023VoxelWise}
H.~Behjat, A.~Tarun, D.~Abramian, M.~Larsson, and D.~Van De~Ville.
\newblock Voxel-wise brain graphs from diffusion {MRI}: {I}ntrinsic eigenspace
  dimensionality and application to functional {MRI}.
\newblock {\em IEEE Open Journal of Engineering in Medicine and Biology},
  6:158--167, 2023.

\bibitem{BREMER200695}
J.~C. Bremer, R.~R. Coifman, M.~Maggioni, and A.~D. Szlam.
\newblock Diffusion wavelet packets.
\newblock {\em Applied and Computational Harmonic Analysis}, 21(1):95--112,
  2006.
\newblock Special Issue: Diffusion Maps and Wavelets.

\bibitem{BULAI202318}
I.~M. Bulai and S.~Saliani.
\newblock Spectral graph wavelet packets frames.
\newblock {\em Applied and Computational Harmonic Analysis}, 66:18--45, 2023.

\bibitem{Cavoretto202225}
R.~Cavoretto, A.~De~Rossi, and W.~Erb.
\newblock {GBFPUM-A MATLAB} package for partition of unity based signal
  interpolation and approximation on graphs.
\newblock {\em Dolomites Research Notes on Approximation}, 15(Special Issue):25
  – 34, 2022.

\bibitem{CAVORETTO2024128502}
R.~Cavoretto, A.~{De Rossi}, S.~Lancellotti, and F.~Romaniello.
\newblock Node-bound communities for partition of unity interpolation on
  graphs.
\newblock {\em Applied Mathematics and Computation}, 467:128502, 2024.

\bibitem{Cavoretto20246685}
R.~Cavoretto, A.~De~Rossi, and S.~Mereu.
\newblock Exploration of kernel parameters in signal {GBF-PUM} approximation on
  graphs.
\newblock {\em Communications in Applied and Industrial Mathematics},
  15(1):66--85, 2024.

\bibitem{10962535}
C.~H.~M. Chan, A.~Cionca, and D.~V.~D. Ville.
\newblock Hilbert transform on graphs: {L}et there be phase.
\newblock {\em IEEE Signal Processing Letters}, 32:1725--1729, 2025.

\bibitem{chung1997spectral}
F.~Chung.
\newblock {\em Spectral {G}raph {T}heory}.
\newblock Conference Board of Mathematical Sciences. American Mathematical
  Society, 1997.

\bibitem{COIFMAN200653}
R.~R. Coifman and M.~Maggioni.
\newblock Diffusion wavelets.
\newblock {\em Applied and Computational Harmonic Analysis}, 21(1):53--94,
  2006.
\newblock Special Issue: Diffusion Maps and Wavelets.

\bibitem{Dam2024}
S.~Dam, P.~Maurel, and J.~Coloigner.
\newblock {G}raph wavelet packets for the classification of brain data in
  anxiety and depression.
\newblock In {\em Proceedings of EUSIPCO 2024}, pages 1436--1440, 2024.

\bibitem{9144464}
J.~Domingos and J.~M.~F. Moura.
\newblock Graph {F}ourier transform: {A} stable approximation.
\newblock {\em IEEE Transactions on Signal Processing}, 68:4422--4437, 2020.

\bibitem{ERB2022368}
W.~Erb.
\newblock Graph signal interpolation with positive definite graph basis
  functions.
\newblock {\em Applied and Computational Harmonic Analysis}, 60:368--395, 2022.

\bibitem{10.3389/fnins.2019.00585}
F.~V. Farahani, W.~Karwowski, and N.~R. Lighthall.
\newblock Application of graph theory for identifying connectivity patterns in
  human brain networks: {A} systematic review.
\newblock {\em Frontiers in Neuroscience}, Volume 13 - 2019, 2019.

\bibitem{Fonseca2024}
R.~Fonseca.
\newblock {\em {A}n overview of spectral graph wavelets}, pages 239--246.
\newblock Springer Nature Switzerland, 2024.

\bibitem{Gabor1946}
D.~Gabor.
\newblock Theory of communication.
\newblock {\em Journal of the Institution of Electrical Engineers (London)},
  93:429--457, 1946.

\bibitem{8038007}
A.~Gavili and X.-P. Zhang.
\newblock On the shift operator, graph frequency, and optimal filtering in
  graph signal processing.
\newblock {\em IEEE Transactions on Signal Processing}, 65(23):6303--6318,
  2017.

\bibitem{GENG2023101579}
R.~Geng, Y.~Gao, H.-K. Zhang, and J.~Zu.
\newblock Graph signal processing on dynamic graphs based on temporal-attention
  product.
\newblock {\em Applied and Computational Harmonic Analysis}, 67:101579, 2023.

\bibitem{Ghandehari2021}
M.~Ghandehari, D.~Guillot, and K.~Hollingsworth.
\newblock Gabor-type frames for signal processing on graphs.
\newblock {\em J. Fourier Anal. Appl.}, 27(2):Paper No. 25, 23, 2021.

\bibitem{HAMMOND2011129}
D.~K. Hammond, P.~Vandergheynst, and R.~Gribonval.
\newblock Wavelets on graphs via spectral graph theory.
\newblock {\em Applied and Computational Harmonic Analysis}, 30(2):129--150,
  2011.

\bibitem{10.1371/journal.pone.0128136}
C.~Hu, L.~Cheng, J.~Sepulcre, K.~A. Johnson, G.~E. Fakhri, Y.~M. Lu, and Q.~Li.
\newblock A spectral graph regression model for learning brain connectivity of
  {A}lzheimer’s disease.
\newblock {\em PLOS ONE}, 10(5):1--24, 05 2015.

\bibitem{8307490}
W.~Huang, T.~A.~W. Bolton, J.~D. Medaglia, D.~S. Bassett, A.~Ribeiro, and
  D.~Van De~Ville.
\newblock A graph signal processing perspective on functional brain imaging.
\newblock {\em Proceedings of the IEEE}, 106(5):868--885, 2018.

\bibitem{8907446}
F.~Ji and W.~P. Tay.
\newblock A {H}ilbert space theory of generalized graph signal processing.
\newblock {\em IEEE Transactions on Signal Processing}, 67(24):6188--6203,
  2019.

\bibitem{Kim2024CrossSpectralAO}
K.~Kim and H.-S. Oh.
\newblock Cross-spectral analysis of bivariate graph signals.
\newblock {\em IEEE Transactions on Pattern Analysis and Machine Intelligence},
  47:11141--11151, 2024.

\bibitem{7355305}
G.~Korats, S.~Le~Cam, R.~Ranta, and V.~Louis-Dorr.
\newblock A space-time-frequency dictionary for sparse cortical source
  localization.
\newblock {\em IEEE Transactions on Biomedical Engineering}, 63(9):1966--1973,
  2016.

\bibitem{7937851}
L.~Le~Magoarou, R.~Gribonval, and N.~Tremblay.
\newblock Approximate fast graph {F}ourier transforms via multilayer sparse
  approximations.
\newblock {\em IEEE Transactions on Signal and Information Processing over
  Networks}, 4(2):407--420, 2018.

\bibitem{8309033}
M.~Ménoret, N.~Farrugia, B.~Pasdeloup, and V.~Gripon.
\newblock Evaluating graph signal processing for neuroimaging through
  classification and dimensionality reduction.
\newblock In {\em 2017 IEEE Global Conference on Signal and Information
  Processing (GlobalSIP)}, pages 618--622, 2017.

\bibitem{Newman2010Networks}
M.~E.~J. Newman.
\newblock {\em Networks: {A}n {I}ntroduction}.
\newblock Oxford University Press, 2010.

\bibitem{8347162}
A.~Ortega, P.~Frossard, J.~Kovačević, J.~M.~F. Moura, and P.~Vandergheynst.
\newblock Graph signal processing: {O}verview, challenges, and applications.
\newblock {\em Proceedings of the IEEE}, 106(5):808--828, 2018.

\bibitem{10095285}
M.~G. Preti, T.~A. W.~Bolton, A.~Griffa, and D.~V. De~Ville.
\newblock Graph signal processing for neurogimaging to reveal dynamics of brain
  structure-function coupling.
\newblock In {\em ICASSP 2023 - 2023 IEEE International Conference on
  Acoustics, Speech and Signal Processing (ICASSP)}, pages 1--5, 2023.

\bibitem{Saito2024}
N.~Saito, S.~C. Schonsheck, and E.~Shvarts.
\newblock Multiscale transforms for signals on simplicial complexes.
\newblock {\em Sampling {T}heory, {S}ignal {P}rocessing, and {D}ata
  {A}nalysis}, 22(2), 2024.

\bibitem{osti_10092281}
R.~Shafipour, A.~Khodabakhsh, and G.~Mateos.
\newblock A windowed digraph {F}ourier transform.
\newblock {\em 2019 IEEE International Conference on Acoustics, Speech and
  Signal Processing (ICASSP)}.

\bibitem{SZ2026}
L.~Shang and Z.~Zhang.
\newblock Frames and vertex-frequency representations in graph fractional
  fourier domain.
\newblock {\em Signal Processing}, 238, 2026.

\bibitem{Shuman2016}
D.~I. Shuman, B.~Ricaud, and P.~Vandergheynst.
\newblock Vertex-frequency analysis on graphs.
\newblock {\em Appl. Comput. Harmon. Anal.}, 40:260--291, 2016.

\bibitem{song2026waveletpacketcontentpositiveoperators}
M.-S. Song and J.~F. Tian.
\newblock Wavelet-packet content for positive operators, 2026.

\bibitem{Zeevi1998}
Y.~Y. Zeevi, M.~Zibulski, and M.~Porat.
\newblock Multi-window {G}abor schemes in signal and image representations.
\newblock In H.~G. Feichtinger and T.~Strohmer, editors, {\em Gabor Analysis
  and Algorithms}, Applied and Numerical Harmonic Analysis, pages 381--406.
  Birkh{\"a}user, Boston, MA, 1998.

\bibitem{ZHENG202135}
X.~Zheng, C.~Zou, L.~Dong, and J.~Zhou.
\newblock Multi-windowed vertex-frequency analysis for signals on undirected
  graphs.
\newblock {\em Computer Communications}, 172:35--44, 2021.

\bibitem{ZibulskiZeevi1997}
M.~Zibulski and Y.~Y. Zeevi.
\newblock Analysis of multiwindow {G}abor-type schemes by frame methods.
\newblock {\em Applied and Computational Harmonic Analysis}, 4(2):188--221,
  1997.

\end{thebibliography}

\end{document}